\documentclass[11pt,a4paper]{article}
\usepackage[T1]{fontenc}
\usepackage[utf8]{inputenc}
\usepackage{lmodern,amsmath,amssymb,amsthm,mathtools,microtype}
\usepackage[margin=27mm]{geometry}
\usepackage{booktabs,longtable,array,xcolor,needspace}
\usepackage[colorlinks=true,linkcolor=blue!40!black,citecolor=blue!40!black,urlcolor=blue!40!black]{hyperref}
\hypersetup{pdftitle={Local obstructions and certified computations for perfect powers in power sums},pdfauthor={Akos Pinter},pdfsubject={Perfect powers in power sums}}
\newtheorem{theorem}{Theorem}[section]
\newtheorem{lemma}[theorem]{Lemma}
\newtheorem{corollary}[theorem]{Corollary}
\newtheorem{proposition}[theorem]{Proposition}
\theoremstyle{remark}
\newtheorem{remark}[theorem]{Remark}
\newtheorem{example}[theorem]{Example}
\newcommand{\Z}{\mathbb Z}
\newcommand{\N}{\mathbb N}
\newcommand{\Q}{\mathbb Q}
\DeclareMathOperator{\lcm}{lcm}

\title{\bfseries Local obstructions and certified computations for perfect powers in power sums}
\author{\'Akos Pint\'er\\[3pt]\small University of Debrecen, Hungary}
\date{}
\begin{document}
\maketitle
\begin{abstract}
Let $S_k(x)=1^k+\cdots+x^k$. We study perfect-power values of
$S_k(x)$ by combining local valuations with finite congruence
certificates. For an irregular Bernoulli pair $(p,r)$ with
$e=v_p(B_r)\ge1$, we prove a valuation formula uniform for
$k\equiv r+1\pmod{p^e(p-1)}$, subject to an explicit condition on
$v_p(x(x+1))$. This gives explicit families of nonexistence results
in which both $x$ and $k$ are unbounded. We also determine all
solutions of $S_k(x)=y^n$ for $3\le x\le10^6$ with
$x\equiv0,3\pmod4$, leaving $k$ and $n\ge2$ unrestricted and
including squares. This extends Hajdu's fixed-$x$ classification
in the same residue classes. A uniform restriction on the normalized
odd part of $S_k(x)$ and finite certificates exclude every $k\ge6$;
the remaining exponents are checked by exact integer arithmetic.
The certificates admit independent verification. We further prove a
limitation of congruence tests of bounded prime-power depth and solve
the equation for every $x\equiv28\pmod{5\,110\,664}$.
\end{abstract}
\noindent\textit{2020 Mathematics Subject Classification.} Primary 11D41;
Secondary 11D61, 11A07.\\
\textit{Keywords.} Power sums; Sch\"affer's conjecture; perfect powers;
$p$-adic valuations; Bernoulli numbers; Kummer congruences;
congruence certificates.

\section{Introduction and main results}
For positive integers $x$ and $k$, put
\[
 S_k(x)=\sum_{j=1}^{x}j^k.
\]
We consider the Diophantine equation
\begin{equation}\label{eq:main}
 S_k(x)=y^n,
 \qquad x,k,y\in\N,\quad n\in\Z_{\ge2}.
\end{equation}
The convention here includes the last term $x^k$.
For a sum ending at $(X-1)^k$, all statements below translate by
$x=X-1$.

Sch\"affer \cite{Sch} proved that, for each fixed pair $(k,n)$ outside
\begin{equation}\label{eq:exceptional-pairs}
 \{(1,2),(3,2),(3,4),(5,2)\},
\end{equation}
equation~\eqref{eq:main} has only finitely many solutions in $x,y$.
His conjecture asserts that, outside these four pairs, the only
solution with $x>1$ is
\[
 (x,k,n,y)=(24,2,2,70).
\]
The identity
\begin{equation}\label{eq:cubes}
 S_3(x)=\left(\frac{x(x+1)}2\right)^2
\end{equation}
accounts for an infinite family that occurs throughout the present paper.

The paper develops two complementary forms of local exclusion.
The first is a valuation theorem attached to irregular Bernoulli
pairs, uniform over entire arithmetic progressions of $k$ and valid
for unbounded $x$. The second is a complete fixed-$x$ classification
through $10^6$, certified by finite residue calculations with no
upper bound on $k$. We also describe a structural limitation of
congruence tests with bounded prime-power depth. The known valuation
formulas used in the computation are stated explicitly, and the
$2$-adic formula is supplied with an elementary proof.

Effective finiteness results were obtained by Gy\H{o}ry, Tijdeman and
Voorhoeve \cite{GTV80}: for fixed $k\ge2$ with $k\notin\{3,5\}$,
the nontrivial solutions can be effectively determined even when $n$
is unknown. Voorhoeve, Gy\H{o}ry and Tijdeman \cite{VGT79} also
studied polynomial perturbations of power sums. Quantitative
refinements include Pint\'er's bounds for the power exponent in
terms of $k$ \cite{P97} and the bounds for the number of solutions
obtained by Brindza and Pint\'er \cite{BP00}. Gy\H{o}ry and Pint\'er
\cite{GP03} give a detailed account of these finiteness results,
quantitative bounds and early complete classifications.

Several works resolve the equation for specified summand exponents.
Jacobson, Pint\'er and Walsh \cite{JPW03} determined the square values
for even $2\le k\le58$. Bennett, Gy\H{o}ry and Pint\'er \cite{BGP04}
proved Sch\"affer's conjecture for every $1\le k\le11$, with arbitrary
$n\ge2$. Pint\'er \cite{P07} proved that there are no solutions with
$x>1$ when $k$ is odd, $1\le k<170$, and $n>4$ is even.
Recent work of Chen, Choi and Houle \cite{CCH26} develops a descent
to coupled generalized Fermat equations and a modular approach for
even $k\le42$.

A related direction allows the initial term of the sum to vary.
Bennett, Patel and Siksek \cite{BPS16} solved the equations involving
three consecutive fifth or sixth powers, and in \cite{BPS17} they
classified perfect powers represented by sums of $2$ to $50$
consecutive cubes. Patel \cite{Patel18} treated sums of $2$ to $10$
consecutive squares. Extensions to arithmetic progressions include
the work of Kundu and Patel \cite{KP21} on squares, Arg\'aez-Garc\'ia
and Patel \cite{AP20} on seven cubes, and Coppola, Curc\'o-Iranzo,
Khawaja, Patel and \"Ulkem \cite{CCKPU24} on nine cubes. These
arithmetic-progression results impose bounds on the common difference
and suitable coprimality conditions. For a broader account of power
values of power sums and the methods used to study them, see the
survey \cite{CCKPU24survey}.

Density results give a further perspective. Patel and Siksek
\cite{PS} proved that, for each fixed even summand exponent and
fixed nonzero common difference, the set of lengths admitting a
perfect-power value has natural density zero. Gy\H{o}ry, Patel,
Pint\'er and Siksek \cite{GPPS26} obtained a related result for
character-twisted sums under an irreducibility hypothesis on a
generalized Bernoulli polynomial. We compare these results with
our local density statements in Section~\ref{sec:odd}.

A complementary approach is to impose conditions on $x$ while
leaving the exponents $k$ and $n$ unrestricted. Hajdu \cite{H15}
pursued this approach to~\eqref{eq:main}. His
Theorem~2.1 establishes Sch\"affer's conjecture under explicit
congruence conditions on $x$, and his Theorem~2.3 determines all
solutions for
\[
 x<25,\qquad x\equiv0,3\pmod4.
\]
His method combines exact $2$-adic and $3$-adic valuations of power
sums with polynomial--exponential congruences. The corrigendum
\cite{H16} corrects the proof of the $3$-adic valuation formula;
the formula and the resulting theorems remain valid.

B\'erczes, Hajdu, Miyazaki and Pink \cite[Theorem~2.1]{BHMP}
subsequently treated every $x<25$ under the assumption $n\ge3$.
They proved that the complete solution set in that range is
\[
 (x,k,n,y)=(1,k,n,1)\quad\text{or}\quad(8,3,4,6).
\]
To treat the remaining classes $x\equiv1,2\pmod4$, they combine
lower bounds for linear forms in two logarithms with congruence
arguments and computation.

\Needspace{9\baselineskip}
We first state the uniform valuation result. Let $B_j$ denote the
Bernoulli numbers with $B_1=-1/2$, and let $v_p$ be the usual
$p$-adic valuation on nonzero rational numbers. The role of Bernoulli
numerators in valuations of power sums is already present in
Lengyel's work \cite[Theorem~5]{L}. The following theorem combines
that principle with Kummer's congruences and gives an explicit
condition that is uniform throughout the stated progression of $k$.

\begin{theorem}[Uniform valuation on an exponent progression]
\label{thm:kummer-uniform}
Let $p\ge7$ be a prime and let $r$ be an even integer satisfying
\[
 4\le r\le p-3,\qquad e=v_p(B_r)\ge1.
\]
Suppose that $x,k$ are positive integers with
\[
 k\equiv r+1\pmod{p^e(p-1)},\qquad
 a=v_p(x(x+1))\ge1,\qquad 2a>e.
\]
Then
\begin{equation}\label{eq:kummer-uniform-valuation}
 v_p(S_k(x))=2a+e.
\end{equation}
If $e$ is odd, $S_k(x)$ is not a square. If $e$ is odd and, in addition,
$x\equiv3,4\pmod8$, equation~\eqref{eq:main} has no solution.
\end{theorem}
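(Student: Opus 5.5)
We will read off $v_p(S_k(x))$ from the Bernoulli expansion of $S_k(x)$ as a polynomial in $x+1$. The claimed value $2a+e$ will come from the quadratic term, and every other term will be shown to have strictly larger valuation.

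\textbf{Reduction.} Since $r$ is even, $k\equiv r+1\pmod{p-1}$ is odd. Moreover $k\ge r+1\ge5$. For odd $k$ the polynomial identity $S_k(-1-x)=(-1)^{k+1}S_k(x)=S_k(x)$ holds. Exactly one of $x$, $x+1$ is divisible by $p$. Replacing $x$ by $-1-x$ if necessary (the expansion below is a polynomial identity, so it may be used at a $p$-adic argument), we may assume $v_p(N)=a$, where $N=x+1$. With $B_1=-1/2$ we have
\[
 S_k(x)=\sum_{i=1}^{N-1}i^k=\sum_{j=0}^{k}\frac{1}{k+1}\binom{k+1}{j}B_jN^{k+1-j}
 =\sum_{j=0}^{k}\binom{k}{j}\frac{B_j}{m}N^{m},\qquad m=k+1-j .
\]
Because $k+1$ is even, $B_j\ne0$ forces either $j\in\{0,1\}$ or $j$ even, in which case $m$ is even. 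The $m=1$ term vanishes since $B_k=0$.

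\textbf{Main term ($m=2$).} This term is $\tfrac{k}{2}B_{k-1}N^2$. We have $k\equiv r+1\not\equiv0\pmod p$ and $k-1\equiv r\not\equiv0\pmod p$, since $1\le r+1\le p-2$. Kummer's congruence for $k-1\equiv r\pmod{p^e(p-1)}$, with $r,k-1\not\equiv0\pmod{p-1}$, gives
\[
 (1-p^{k-2})\frac{B_{k-1}}{k-1}\equiv(1-p^{r-1})\frac{B_r}{r}\pmod{p^{e+1}}.
\]
As $v_p(B_r/r)=e$, this yields $v_p(B_{k-1})=e$. Hence the main term has valuation exactly $2a+e$.

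\textbf{Remaining terms.} There are four kinds.
\begin{itemize}
\item $j=0$: the term $N^{k+1}/(k+1)$ has valuation $(k+1)a\ge 6a>2a+e$, using $v_p(k+1)=0$ (since $k+1\equiv r+2\not\equiv0\pmod p$) and $2a\ge e+1$.
\item $j=1$: the term $-N^k/2$ has valuation $ka\ge5a>2a+e$.
\item Even $j\ge2$ with $m\ge4$ even: the valuation is at least $ma-v_p(m)+v_p(B_j)$. By von Staudt--Clausen, $v_p(B_j)\ge0$ unless $(p-1)\mid j$, in which case $v_p(B_j)=-1$. That exceptional case forces $m\equiv r+2\pmod{p-1}$, hence $m\ge r+2\ge6$. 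So we need
\[
 (m-2)a>e+v_p(m)+\delta,\qquad \delta\in\{0,1\},\ \delta=1\Rightarrow m\ge6 .
\]
Using $2a\ge e+1$ and $a\ge1$, for $m=4$ we get $2a>e$. For $m\ge6$ we get $(m-2)a\ge 2a+(m-4)\ge e+1+(m-4)$. This exceeds $e+1+v_p(m)$ because $m-4>\log_p m\ge v_p(m)$ for $p\ge7$.
\item Even $j$ with $m=2$: this is the main term already treated.
\end{itemize}
Hence the ultrametric inequality gives $v_p(S_k(x))=2a+e$. I expect this term-by-term bookkeeping (in particular making sure the weak hypothesis $2a>e$ suffices, which it does only because odd Bernoulli numbers kill the $m=3$ term) to be the main obstacle.

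\textbf{Consequences.} If $e$ is odd, then $2a+e$ is odd, so $S_k(x)$ is not a square. For the last claim, $k$ is odd, and the $2$-adic valuation formula for odd $k$ stated in the paper gives $v_2(S_k(x))=v_2\bigl(x(x+1)/2\bigr)$ in the relevant cases. (This is immediate from $(x(x+1)/2)\mid S_k(x)$ for odd $k$, up to the paper's exact form of the formula.) For $x\equiv3,4\pmod8$ this value is $1$. A $2$-adic valuation equal to $1$ is impossible for $y^n$ with $n\ge2$, so \eqref{eq:main} has no solution.
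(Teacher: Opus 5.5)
Your derivation of $v_p(S_k(x))=2a+e$ is correct and is essentially the paper's argument. You apply Kummer's congruence with $u=k-1$, $v=r$, $h=e+1$ to get $v_p(B_{k-1})=e$, and then show that the quadratic term dominates the Faulhaber polynomial, with the $z$ and $z^3$ coefficients vanishing. There are two minor differences. You use the reflection $S_k(-1-x)=S_k(x)$ where the paper adds $x^k$ in the case $p\mid x$. Your coefficient bound loses $v_p(m)$, but you compensate for this correctly. The non-square conclusion for odd $e$ is also fine.

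The gap is in the final claim. For odd $k\ge3$ it is not true that $v_2(S_k(x))=v_2\bigl(x(x+1)/2\bigr)$. The divisibility $x(x+1)/2\mid S_k(x)$ gives only the lower bound $v_2(S_k(x))\ge1$. Lemma~\ref{lem:two} actually gives $v_2(S_k(x))=2v_2(x(x+1))-2$, which equals $2$, not $1$, when $x\equiv3,4\pmod 8$. Concretely, $S_3(x)=\bigl(x(x+1)/2\bigr)^2$ is a square for every $x$, and $S_5(4)=1300=2^2\cdot325$. Your argument never uses the hypothesis that $e$ is odd, which is a warning sign. As written, it would exclude perfect-power values for every odd $k$ and every $x\equiv3,4\pmod8$, which the cube identity contradicts.

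The correct finish runs as follows. From $v_2(S_k(x))=2$ and $v_2(y^n)=n\,v_2(y)$, you get $n\mid 2$, so $n=2$. The square case is then excluded by the odd $p$-adic valuation $2a+e$ that you have just proved, and this is exactly where $e$ odd is needed. The exact $2$-adic formula for odd $k\ge3$ is the nontrivial input here, so you must prove it or cite it correctly.
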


The congruence on $k$ makes $k$ odd and $k\ge r+1\ge5$.
For $e=1$, the only condition on the $p$-adic depth of the endpoint
is $p\mid x(x+1)$. In particular, the theorem applies to both
classes $x\equiv1,2\pmod4$ as well as to the two classes used in
the finite computation, although its final perfect-power exclusion
uses the additional condition modulo $8$.

\begin{corollary}\label{cor:kummer-families}
Equation~\eqref{eq:main} has no solution under either of the
following sets of congruence conditions:
\begin{align*}
 k&\equiv33\pmod{1332},&
 x&\equiv36,147,148,259\pmod{296};\\
 k&\equiv13\pmod{476\,790},&
 x&\equiv691,2763,2764,4836\pmod{5528}.
\end{align*}
\end{corollary}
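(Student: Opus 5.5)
The plan is to obtain the corollary as two direct specializations of Theorem~\ref{thm:kummer-uniform}, one for each irregular pair, with $e=1$ in both cases. Since $e=1$ is odd, the last sentence of the theorem excludes all solutions as soon as $k\equiv r+1\pmod{p(p-1)}$, $p\mid x(x+1)$ and $x\equiv3,4\pmod8$. The condition $2a>e$ then holds automatically because $a\ge1$.

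\emph{First family.} I would take $(p,r)=(37,32)$. Here $p\ge7$, and $r$ is even with $4\le 32\le 34=p-3$. It is classical that $37$ is irregular, exactly dividing the numerator of $B_{32}$, so $e=v_{37}(B_{32})=1$; I would cite a table of Bernoulli numerators or state the factorization of the numerator of $B_{32}$. Then $p^e(p-1)=37\cdot36=1332$ and $r+1=33$, which matches the stated progression for $k$. For $x$, the condition is $x\equiv0,-1\pmod{37}$ together with $x\equiv3,4\pmod8$. The Chinese remainder theorem modulo $296=8\cdot37$ gives four classes, which I would check directly:
\begin{align*}
36&=37-1\equiv4\pmod 8, & 147&=4\cdot37-1\equiv3\pmod 8,\\
148&=4\cdot37\equiv4\pmod 8, & 259&=7\cdot37\equiv3\pmod 8.
\end{align*}

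\emph{Second family.} I would take $(p,r)=(691,12)$. The numerator of $B_{12}=-691/2730$ is $691$, so $e=1$, and $4\le12\le688$. The modulus is $691\cdot690=476\,790$ and $r+1=13$. Modulo $5528=8\cdot691$, the classes with $x\equiv0,-1\pmod{691}$ and $x\equiv3,4\pmod8$ are $691\equiv3$, $2763=4\cdot691-1\equiv3$, $2764\equiv4$, and $4836=7\cdot691-1\equiv4\pmod8$.

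In both cases the theorem gives $v_p(S_k(x))=2a+1$, which is odd. Hence $S_k(x)$ is not a square, and the mod-$8$ condition rules out the remaining exponents $n$. The only step that is not pure arithmetic is certifying $v_{37}(B_{32})=1$. I would handle it by quoting the known factorization of the numerator of $B_{32}$, namely $7709321041217=37\cdot683\cdot305065927$, in which $37$ appears exactly once.
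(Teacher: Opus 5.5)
Your proposal is correct and matches the paper's proof: both specialize Theorem~\ref{thm:kummer-uniform} to $(p,r,e)=(37,32,1)$ and $(691,12,1)$, and both combine $x\equiv0,-1\pmod p$ with $x\equiv3,4\pmod 8$ by the Chinese remainder theorem. Your factorization $7709321041217=37\cdot683\cdot305065927$ checks $v_{37}(B_{32})=1$ explicitly, whereas the paper simply asserts it from the exact value of $B_{32}$.
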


Both $x$ and $k$ are unbounded in these families. The first family
contains the subprogression $k\equiv33\pmod{2664}$, which satisfies
$k\equiv1\pmod8$ and therefore survives the initial necessary
condition for squares used below. We prove the theorem and its
corollary in Section~\ref{subsec:kummer-obstructions}.

Our computational result extends Hajdu's classification in the two
classes $x\equiv0,3\pmod4$ to $3\le x\le10^6$, retaining all
power exponents $n\ge2$, including squares. Although the range of
$x$ is finite, both $k$ and $n$ remain unrestricted. The proof
reduces the complete classification to finite congruence
certificates and exact integer arithmetic.
\Needspace{13\baselineskip}
\begin{theorem}\label{thm:finite}
Let $3\le x\le10^6$ and $x\equiv0,3\pmod4$.
All solutions of~\eqref{eq:main} are
\begin{equation}\label{eq:universal}
 (k,n,y)=\left(3,2,\frac{x(x+1)}2\right)
\end{equation}
for every such $x$, together with the following nine solutions:
\begin{equation}\label{eq:exceptions}
 \begin{gathered}
 (x,k,n,y)=(8,1,2,6),\ (8,3,4,6),\ (24,2,2,70),\\
 (288,1,2,204),\ (288,3,4,204),\\
 (9800,1,2,6930),\ (9800,3,4,6930),\\
 (332928,1,2,235416),\ (332928,3,4,235416).
 \end{gathered}
\end{equation}
\end{theorem}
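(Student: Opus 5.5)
The plan follows the abstract: a uniform local restriction that leaves only finitely many residue classes of $k$ to examine for each $x$, finite congruence certificates that exclude every $k\ge6$, and exact integer arithmetic for $k\le5$.

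\textbf{Step 1 (2-adic reduction).} Since $x\equiv0,3\pmod4$, we have $4\mid x(x+1)$. I would use the exact $2$-adic valuation formula for $S_k(x)$ (Hajdu's formula, for which the paper supplies an elementary proof). For odd $k\ge3$ and even $x(x+1)/2$, it gives $v_2(S_k(x))=2v_2(x(x+1))-2$. For even $k$ it gives $v_2(S_k(x))=v_2(x(x+1))-1$, up to an explicit correction. This pins down $v_2(y^n)$, so only $n$ dividing a small explicit number remain, except where the valuation is $0$. Together with the analogous $3$-adic formula this reduces $n$ to a finite list of primes (and $4$) depending on $x$. I would then define the normalized odd part $T_k(x)=S_k(x)/2^{v_2(S_k(x))}$. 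For odd $k$, $S_k(x)$ is divisible by $(x(x+1)/2)^2$, so I would divide that factor out as well. The \emph{uniform restriction} I expect is a congruence condition on $T_k(x)$ modulo $8$, or modulo a small power of $2$: it forces $k$ into a few residue classes, for example $k\equiv1\pmod8$ for squares, as the remark after Corollary~\ref{cor:kummer-families} suggests.

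\textbf{Step 2 (finite certificates for $k\ge6$).} For each $x$ and each surviving prime exponent $\ell$ (with $\ell=2$ covering all even $n$), I would pick auxiliary primes $q\equiv1\pmod\ell$. The map $k\mapsto S_k(x)\bmod q$ is periodic in $k$ with period $q-1$. I would choose a modulus $M=\lcm$ of the various $q-1$ together with the $2$-power period from Step~1, and list the classes $k\bmod M$. Each class must be killed by some $q$ for which $S_k(x)$ is not an $\ell$-th power residue modulo $q$. The exception is when $S_k(x)\equiv0\pmod q$, and there Step~1's valuations or a $q$-adic valuation argument must be used instead. The universal solution $k=3$, $n=2$ and the exceptional $k\in\{1,2,3\}$ lie in classes that can never be killed. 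To separate them from $k\ge6$ in the same class, I would use one prime power $q^j$ with $j$ large, so that $k$ is determined modulo a suitable $p^e(p-1)$ as in Theorem~\ref{thm:kummer-uniform}. Alternatively, I would use valuation information that changes once $k\ge6$. Each certificate is then a list of pairs (class, prime), and verifying it is mechanical.

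\textbf{Step 3 ($k\le5$).} For $k\le5$, $S_k(x)$ is an explicit polynomial, and I would test directly whether $S_k(x)$ is a perfect power for each $x\le10^6$, using exact integer roots. Alternatively, I would invoke Bennett--Gy\H{o}ry--Pint\'er \cite{BGP04} for $k\le11$, which already gives the listed solutions. The case $k=1$ with square triangular numbers yields $x=8,288,9800,332928$, and $k=3$, $n=4$ yields the same values of $x$.

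\textbf{Main obstacle.} The hardest step is Step~2's limitation: congruences of bounded prime-power depth cannot distinguish $k$ from $k'$ that agree $p$-adically to that depth. Residue classes containing a genuine solution, such as $k=3$, therefore also contain infinitely many $k\ge6$ that no such test excludes. I would first handle these classes with the $2$- and $3$-adic normalized-odd-part restriction, since dividing by $(x(x+1)/2)^2$ makes $k=3$ degenerate and $k\ge6$ generically non-degenerate. Where that fails, I would fall back on a Kummer-type valuation as in Theorem~\ref{thm:kummer-uniform}. Making this work uniformly over $10^6$ values of $x$, and keeping the certificates small, is where I expect most of the effort to go.
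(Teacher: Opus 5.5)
Your outline has the same architecture as the paper's proof: reduction to prime exponents $r\mid 2b$ via Lemma~\ref{lem:two}, certificates built from periodicity in $k$ and power-residue tests, and exact arithmetic for $k\le5$. There is, however, a genuine gap at the one non-routine step. This is the removal, in the square branch, of the residue classes of $k$ that contain the universal solution $k=3$.

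\textbf{The proposed mechanism fails.} It rests on the claim that $(x(x+1)/2)^2$ divides $S_k(x)$ for odd $k$, and this is false. The value $x=3$ lies in the range, and $S_5(3)=276$ is not divisible by $36$. More generally, Proposition~\ref{thm:odd-prime} gives $v_p(S_k(x))=2v_p(x(x+1))-1$ whenever $p-1\mid k-1$, $p\mid x(x+1)$ and $p\nmid k$. So your $T_k(x)$ is not an integer, and the ``degenerate versus non-degenerate'' heuristic proves nothing.

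\textbf{The fallbacks fail too.} Let $p$ be odd, $q=p^j$, $k\ge j$ and $k\equiv3\pmod{\lambda(q)}$. Put $T=x(x+1)/2$, $N=\lfloor x/p\rfloor$ and $T'=N(N+1)/2$. The multiples of $p$vanish modulo $q$, so
\[
 S_k(x)\equiv S_3(x)-p^3S_3(N)=T^2\bigl(1-p^3T'^2/T^2\bigr)\pmod q .
\]
One checks $v_p(T')\ge v_p(T)-1$. Hence the bracket is a unit congruent to $1$ modulo $p$, and so a $p$-adic square. Thus every square test modulo an odd prime power, of any depth, is passed by all such $k$. In Proposition~\ref{prop:certificate} the class $k\equiv3$ would survive every lift. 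A $3$-adic normalization, deep odd prime powers, and Kummer-type valuations therefore cannot remove this class.

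\textbf{What is actually needed.} The exclusion must come from $2$-adic depth, and this is exactly Lemma~\ref{lem:uniform-two-square}, which you only say you ``expect''. Let $m=2^ad$ be the even member of $\{x,x+1\}$ and let $k\ge5$ be odd. The pairing $j\leftrightarrow m-j$ gives $A_k(m)\equiv\frac{km}{2}A_{k-1}(m)\pmod{2^{2a+1}}$. A block expansion gives $A_{k-1}(m)\equiv2^{a-1}dc_{k-1}\pmod{2^{a+2}}$. Hence $S_k(x)/2^{2a-2}\equiv kc_{k-1}\pmod 8$.

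For $k\equiv3\pmod8$ this residue is $7$, whereas for $k=3$ the normalized value is an odd square. So squares force $k\equiv1\pmod8$, with an analogous table for even $k$. This is the initial set $A_0$ of the certificates for $r=2$. Only after this step do deep odd prime powers do what you describe, namely separate the $k=1$ solutions from large $k\equiv1$. An example is $625$ at $x=9800$, where the correction $pS_1(N)$ can have the same valuation as $S_1(x)$.

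\textbf{Two smaller points.} Periodicity modulo $\ell^j$ holds only for $k\ge j$; this is why the paper caps the prime-power exponent at $6$ and takes $K=6$. Also, \cite{BGP04} says nothing about the pairs in~\eqref{eq:exceptional-pairs}, such as $(5,2)$, so it cannot replace the direct check for $k\le5$.
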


\Needspace{5\baselineskip}
The theorem gives $500\,009$ solution quadruples for the $500\,000$ values
of $x$ in its range. It extends Hajdu's fixed-$x$ theorem in the
same two residue classes. Extending the full range of \cite{BHMP}
would also require treating $x\equiv1,2\pmod4$, where the
$2$-adic valuation does not provide the exponent reduction used here.

A useful refinement in the square case is the uniform congruence
for the normalized odd part of $S_k(x)$ in
Lemma~\ref{lem:uniform-two-square}. For odd $k\ge5$, it implies
that a square value requires $k\equiv1\pmod8$, uniformly for
every $x\equiv0,3\pmod4$. Hajdu already uses this restriction
for $x\equiv3,4\pmod8$ in the proof of \cite[Theorem~2.1]{H15}.
The normalized formulation gives a common starting point for all
the square branches in the present computation and avoids the use
of large powers of $2$ as certificate moduli.

For each $x$, the known valuation formula reduces the perfect-power
problem to finitely many equations $S_k(x)=z^r$ with $r$ prime.
For each resulting pair $(x,r)$, periodicity in $k$ then turns a finite list of
congruence conditions into a certificate excluding every $k\ge6$.
Proposition~\ref{prop:certificate} makes this reduction explicit.
All the certificate moduli are at most $991$; the remaining cases
$1\le k\le5$ are checked by exact integer arithmetic, including
every possible composite power exponent. The electronic supplement
contains the full lists of moduli and a verifier that recomputes
the residue intersections directly. Verification therefore does
not depend on the search that produced the certificates.

Prime-power moduli have a specific role in this argument. Since
$S_1(9800)=6930^2$, a square-residue sieve using any finite collection
of prime moduli retains an infinite arithmetic progression of
exponents congruent to $1$ modulo the corresponding unit-group
periods. Section~\ref{sec:finite} shows
how the modulus $625$ distinguishes these large exponents from
the solution with $k=1$. This example illustrates Proposition~\ref{prop:bounded-depth},
which gives a general obstruction to eliminating all large exponents
by congruence tests of bounded prime-power depth. The proposition
also explains why the full arithmetic depth of a normalized test
must be taken into account.

The same local approach also gives a complete classification on
an infinite arithmetic progression.
\begin{theorem}\label{thm:progression}
Suppose that
\[
 x\equiv28\pmod{5\,110\,664},
 \qquad 5\,110\,664=8\cdot13\cdot157\cdot313.
\]
Then all solutions of~\eqref{eq:main} are given by~\eqref{eq:universal}.
\end{theorem}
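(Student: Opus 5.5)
The plan is to run the fixed-$x$ machinery of Theorem~\ref{thm:finite}, but with congruence conditions that depend only on $x$ modulo $M=5\,110\,664=8\cdot13\cdot157\cdot313$. This makes every step uniform over the progression. Since $x\equiv28\pmod 8$ gives $x\equiv4\pmod8$, we have $x\equiv0\pmod4$, and the known $2$-adic valuation formula applies: $v_2(S_k(x))$ is determined by $v_2(x)=2$ and by $k$. Thus for odd $k\ge3$ we get $v_2(S_k(x))=2v_2(x)-2=2$ (the $k=3$ shape), and there is a similar explicit value for even $k$ and for $k=1$. In each branch, $y^n=S_k(x)$ forces $n\mid v_2(S_k(x))$. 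This bounds $n$, so we reduce to $n=2$ (and $n=4$ is subsumed once squares are classified) or to finitely many primes $r$ dividing a small valuation. Where the valuation is $1$ or otherwise incompatible, the branch is excluded at once.

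For the square branch with odd $k\ge5$, I will invoke Lemma~\ref{lem:uniform-two-square}, which forces $k\equiv1\pmod8$. I then use the primes $q\in\{13,157,313\}$. Because $x\equiv28\pmod q$ is fixed, the value $S_k(x)\bmod q$ depends only on $k\bmod (q-1)$, namely on $\sum_{j\le 28} j^k \bmod q$. More generally, $x\equiv 28$ modulo $q$ together with the $q$-adic information gives periodicity. The key point is that $x$ itself is unbounded, so I can only use moduli dividing $M$ (and powers of $2$ already encoded in the normalized odd part). I would choose $13,157,313$ so that $12,156,312$ share the factors $4,3,13$ with each other and with $8$. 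The intersection of the residue conditions ``$S_k(x)$ is a nonzero square or $0$ modulo $q$'', over all $k$ classes modulo $\lcm(8,12,156,312)=312$, should then leave only $k\equiv3$-type classes that are killed by the $2$-adic or Lemma~\ref{lem:uniform-two-square} condition. The remaining small $k$ are handled separately. For even $k$, the same residue sieve, combined with the $2$-adic parity constraint, applies. Here $k=2$ must be ruled out explicitly: $S_2(x)=x(x+1)(2x+1)/6$, and I check that it is a nonsquare modulo one of $13,157,313$ when $x\equiv28$.

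The small cases are $k=1$, $k=3$, and $k=2,4,5$. For $k=3$ we have $S_3=(x(x+1)/2)^2$, which gives \eqref{eq:universal}. Any higher power would need $x(x+1)/2$ to be a square or higher power, and this I exclude by the same residues (e.g.\ $x(x+1)/2\bmod q$ is a nonresidue). For $k=1$ I need $x(x+1)/2$ to be a nonsquare, and in fact a non-perfect power, which the solutions $x=8,288,\dots$ show is genuinely $x$-dependent. The modulus $M$ must have been chosen so that $28\cdot29/2=406$ is a nonresidue modulo some $q\mid M$. For higher prime $r$, the $2$-adic valuation already kills all of these cases.

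The main obstacle is Proposition~\ref{prop:bounded-depth}-type leakage. Since $k=1$ gives no solution here but $k\equiv1$ modulo all unit-group periods resembles $k=1$, a prime-modulus sieve by itself cannot exclude large $k\equiv1\pmod{312}$ unless the $k=1$ value is itself a nonresidue. So I would first verify that, for $x\equiv28$, the residues of $S_1(x)$ modulo $13$, $157$ or $313$ are nonsquares. This makes the $k\equiv1$ class die uniformly. I would then carry out the finite residue-table check over $k\bmod 312$ (and the appropriate periods for the odd $r$), in the manner of Proposition~\ref{prop:certificate}.
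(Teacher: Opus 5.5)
Your strategy matches the paper's. The $2$-adic valuation leaves only odd $k\ge3$ with $n=2$. The normalized odd part (Lemma~\ref{lem:uniform-two-square}; the paper uses the equivalent table of $S_k(x)\bmod32$) forces $k\equiv1\pmod8$ for $k\ge5$. A quadratic-residue sieve modulo $13,157,313$, with combined period $312$, is meant to finish.

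The gap is that you never run the sieve. You only assert that the intersection ``should'' be empty, and for a statement of this kind that finite computation \emph{is} the proof: nothing a priori prevents some class modulo $312$ from surviving all three primes. What has to be checked is the following.
\begin{itemize}
\item For odd $k$, complete blocks vanish modulo each odd prime $q$, so $S_k(x)\equiv S_k(28)\pmod q$.
\item Modulo $13$ this is $1+2^k$, which equals $3,6,7$ for $k\equiv1,9,17\pmod{24}$. Only $3$ is a square, so $k\equiv1\pmod{24}$.
\item Of the thirteen lifts modulo $312$, only $k\equiv193,217$ give quadratic residues modulo $157$.
\item Modulo $313$ these two classes give $89$ and $178=2\cdot89$, both nonresidues.
\end{itemize}
Your last paragraph correctly isolates one of the thirteen classes: $k\equiv1\pmod{312}$ dies only because $S_1(28)=406\equiv92$ is a nonresidue modulo $157$. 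The other twelve classes must be handled as well.

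Your treatment of $k=1$ and even $k$ is also off. Since $v_2(x(x+1))=2$, Lemma~\ref{lem:two} gives $v_2(S_k(x))=1$ for $k=1$ and for every even $k$, which excludes every perfect power at once. The residue checks of $406$ and of $S_2(x)$ are therefore unnecessary. Likewise, for odd $k\ge3$ the valuation $2$ forces $n=2$ exactly, so there is no case $n=4$ and no need to show that $x(x+1)/2$ is not a higher power.

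The residue route you sketch for even $k$ would not even be uniform in $x$. When $q-1\mid k$, each complete block contributes $-1$ modulo $q$, so $S_k(x)\bmod q$ depends on $\lfloor x/q\rfloor\bmod q$, that is, on $x$ modulo $q^2$, not only on $x\equiv28\pmod q$. This is harmless here only because the $2$-adic valuation already disposes of those exponents.
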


Theorem~\ref{thm:progression} shows how a certificate for one value
of $x$ can hold uniformly over an arithmetic progression. Hajdu
already noted that his method permits further congruence
conditions on $x$ \cite[Remark~1]{H15}. Here we give a fully
explicit instance, with all the residue tables included in the proof.

Section~\ref{sec:sieve} develops the valuation reduction, the
limitation on congruence tests of bounded depth, and the certificate
criterion. Sections~\ref{sec:finite} and~\ref{sec:progression}
prove Theorems~\ref{thm:finite} and~\ref{thm:progression},
respectively. The worked example in Section~\ref{subsec:example224}
gives the complete argument for $x=224$.
Section~\ref{sec:odd} develops the local results independent of the
finite computation. It includes an elementary pairing proof of a
consequence of Lengyel's valuation theorem, local density statements,
and a fixed-exponent Bernoulli obstruction. The final subsection
proves Theorem~\ref{thm:kummer-uniform} and
Corollary~\ref{cor:kummer-families}, using the full Kummer
congruences to obtain uniformity in $k$.

\section{Valuation reduction and periodic congruences}\label{sec:sieve}
For a prime $\ell$ and a nonzero integer $N$, write $v_\ell(N)$ for
the exponent of $\ell$ in $N$. We use the same notation for the
usual extension to nonzero rational numbers.

\subsection{The possible power exponents}
The following formula is due to MacMillan and Sondow \cite{MS}; it
is reproduced as Lemma~3.1 in \cite{H15}. We include an elementary
proof to make the valuation reduction self-contained.
\begin{lemma}\label{lem:two}
For positive integers $x,k$, let $a=v_2(x(x+1))$. Then
\begin{equation}\label{eq:two}
 v_2(S_k(x))=
 \begin{cases}
 a-1,& k=1\text{ or }k\text{ is even},\\
 2a-2,& k\ge3\text{ is odd}.
 \end{cases}
\end{equation}
\end{lemma}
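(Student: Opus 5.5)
We plan to reduce the statement to a claim about the complete residue sums
\[
 T_k(M)=\sum_{j=0}^{M-1}j^k ,\qquad M=2^b u,\ b\ge1,\ u\text{ odd},
\]
namely
\[
 v_2(T_k(M))=b-1 \text{ for } k=1 \text{ or } k \text{ even},\qquad
 v_2(T_k(M))=2b-2 \text{ for odd } k\ge3 .
\]
Given this, let $a=v_2(x(x+1))$. If $x$ is odd, then $S_k(x)=T_k(x+1)$ with $b=a$, and the lemma follows. If $x$ is even, then $S_k(x)=T_k(x)+x^k$ with $b=a$. Here $v_2(x^k)=ka$, which is strictly larger than $a-1$. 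It is also strictly larger than $2a-2$ when $k\ge3$, since $3a>2a-2$. Adding $x^k$ therefore does not change the valuation. The case $k=1$ is immediate from $T_1(M)=M(M-1)/2$.

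\textbf{The case $k$ even.} We argue by induction on $b$, separating even and odd summands:
\[
 T_k(2M')=2^kT_k(M')+\sum_{i=0}^{M'-1}(2i+1)^k
 =2^kT_k(M')+M'+\sum_{t=1}^{k}\binom kt 2^tT_t(M').
\]

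For $b=1$, the integer $M'=u$ is odd. The right-hand side is then congruent to $u$ modulo $2$, which gives valuation $0=b-1$, as required.

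For $b\ge2$, write $M'=2^{b-1}u$. The induction hypothesis, applied to all exponents $t$, gives $v_2(T_t(M'))\ge b-2$. We then estimate each term on the right:
\begin{itemize}
\item[] $v_2(M')=b-1$ exactly;
\item[] $v_2\bigl(2^kT_k(M')\bigr)\ge k+b-2\ge b$;
\item[] $v_2\bigl(\binom k1\,2\,T_1(M')\bigr)=1+v_2(k)+(b-2)\ge b$, because $k$ is even;
\item[] $v_2\bigl(\binom kt 2^tT_t(M')\bigr)\ge t+b-2\ge b$ for $t\ge2$.
\end{itemize}
Only the term $M'$ has valuation $b-1$, so $v_2(T_k(2M'))=b-1$.

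A point to check is that the induction must be run simultaneously over all $k$, including odd $t$. For odd $t$ the hypothesis gives valuation $b-1$ or $2(b-1)-2$. Both are $\ge b-2$ once $b\ge2$, so the bound used above holds.

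\textbf{The case $k\ge3$ odd.} We use the pairing $j\leftrightarrow M-j$. Expanding binomially,
\[
 (M-j)^k=-j^k+kMj^{k-1}-\binom k2M^2j^{k-2}+\cdots .
\]
Summing over $0\le j\le M-1$, and noting that the terms $j=0$ and $j=M$ contribute nothing, we get
\[
 2T_k(M)=kM\,T_{k-1}(M)-\binom k2M^2T_{k-2}(M)+\sum_{s\ge3}\pm\binom ks M^sT_{k-s}(M),
\]
where $T_0(M)=M$.

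The leading term has valuation $b+(b-1)$, by the even case applied to $k-1$. We expect the main obstacle to be showing that every later term has valuation at least $2b$. For $s\ge2$ the term has valuation at least $sb+v_2(T_{k-s}(M))\ge 2b+(b-1)$, using the even case or $k-s=1$, so this is fine. The one delicate spot is $T_0=M$, which is harmless since it has valuation $b$. Hence $v_2(2T_k(M))=2b-1$, that is, $v_2(T_k(M))=2b-2$, and the lemma follows.
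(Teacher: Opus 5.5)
Your argument is correct, but the even case goes by a different decomposition from the paper's.

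The paper folds the range by $i\leftrightarrow m-i$ on $1\le i\le m-1$. For even $k$ this gives $A_k(m)\equiv2A_k(m/2)\pmod{2^a}$ at once, so the induction stays inside even exponents. The odd case then follows from the same half-range pairing, $A_k(m)=(m/2)^k+kmA_{k-1}(m/2)+m^2T$, using the even case one level down and a remainder that only needs to be divisible by $m^2$.

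You instead split the summation index by parity. This gives an exact doubling identity expressing $T_k(2M')$ through all the $T_t(M')$. The cost is that the odd-$t$ terms force the interleaved induction you correctly describe: even exponents at level $b$ from all exponents at level $b-1$, then odd exponents at level $b$ from even ones at level $b$. Your odd step is the full-range pairing the paper uses later in Lemma~\ref{lem:uniform-two-square} and Proposition~\ref{thm:odd-prime}, applied with the even case at the same level. The paper's route is shorter. Yours makes the structure of the doubling step explicit, at the price of tracking odd exponents through the induction.

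There are two harmless slips in your odd case.
\begin{itemize}
\item With $T_0(M)=M$, your pairing identity is off by $-M^k$. The index $j=0$ is sent to $M$, which lies outside the range, so $\sum_{j=0}^{M-1}(M-j)^k=T_k(M)+M^k$. The identity is exact with $T_0(M)=M-1$, matching the paper's $A_0(m)=m-1$. In any case the discrepancy has valuation $kb\ge3b$, so it does not affect the result.
\item Your bound $2b+(b-1)$ for the terms with $s\ge2$ cites only the even case and $k-s=1$. That misses odd $k-s\ge3$, for example $s=2$ with $k\ge5$. You only need valuation at least $2b$ for those terms, and this follows from the factor $M^s$ alone, since $T_{k-s}(M)$ is an integer.
\end{itemize}
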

\begin{proof}
For $k=1$, use $S_1(x)=x(x+1)/2$. Suppose henceforth that $k\ge2$,
let $m$ be the even member of $\{x,x+1\}$, and put
$A_j(m)=\sum_{i=1}^{m-1}i^j$. Thus $v_2(m)=a$.
If $a=1$, the sum $A_k(m)$ contains $m/2$ odd terms, an odd
number, so $A_k(m)$ is odd.

For even $k\ge2$, proceed by induction on $a$. The case $a=1$
has just been proved. For $a\ge2$, pairing $i$ with $m-i$ gives
\[
 A_k(m)=\left(\frac m2\right)^k+
 \sum_{i=1}^{m/2-1}\bigl(i^k+(m-i)^k\bigr)
 \equiv2A_k(m/2)\pmod{2^a}.
\]
The middle term is divisible by $2^a$, since $k(a-1)\ge a$.
Induction gives $v_2(A_k(m/2))=a-2$, hence
$v_2(A_k(m))=a-1$.

For odd $k\ge3$ and $a\ge2$, the same pairing yields
\[
 A_k(m)=\left(\frac m2\right)^k+
 kmA_{k-1}(m/2)+m^2T
\]
for some integer $T$. The even case already proved shows that
the second term has valuation $a+(a-2)=2a-2$. The first term has
valuation $k(a-1)>2a-2$, and $m^2T$ is divisible by $2^{2a}$.
Thus $v_2(A_k(m))=2a-2$.

Finally, $S_k(x)=A_k(m)$ for $x=m-1$, and
$S_k(x)=A_k(m)+m^k$ for $x=m$. In the latter case $m^k$ has
strictly larger valuation than $A_k(m)$, so the valuation is
unchanged. This also covers $a=1$, and completes the proof.
\end{proof}

\begin{corollary}\label{cor:exponents}
Suppose $x\equiv0,3\pmod4$, and set
$b=v_2(x(x+1))-1\ge1$. If~\eqref{eq:main} holds, then $n\mid b$
when $k=1$ or $k$ is even, and $n\mid2b$ when $k\ge3$ is odd.
In particular, $S_k(x)$ is an $r$-th power for some prime $r\mid2b$.
If $r\nmid b$, such a prime-power representation requires odd $k\ge3$.
\end{corollary}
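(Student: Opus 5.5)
The corollary follows directly from Lemma~\ref{lem:two}. First note that $x\equiv0,3\pmod4$ means $4$ divides one of $x$ and $x+1$. Hence $a=v_2(x(x+1))\ge2$, and so $b=a-1\ge1$.

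Suppose $S_k(x)=y^n$ with $n\ge2$. Since $S_k(x)\ge1$, we have $y\ge1$, and $S_k(x)$ is a positive integer. Comparing $2$-adic valuations gives $v_2(S_k(x))=n\,v_2(y)$. Lemma~\ref{lem:two} evaluates the left side as $b$ when $k=1$ or $k$ is even, and as $2a-2=2b$ when $k\ge3$ is odd. So $n$ divides $b$ or $2b$ according to the case. Both quantities are nonzero because $b\ge1$, and this is where the hypothesis on $x$ modulo $4$ matters: it rules out the degenerate conclusion $n\mid0$ that would occur if $a=1$.

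For the prime reduction, pick any prime $r\mid n$, which exists since $n\ge2$. Writing $S_k(x)=(y^{n/r})^r$ shows that $S_k(x)$ is an $r$-th power. Also $r\mid n\mid 2b$ in either case, since $b\mid 2b$.

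For the last sentence, suppose $S_k(x)=z^r$ with $r$ prime. Apply the valuation identity again with $n$ replaced by $r$. If $k=1$ or $k$ is even, then $r\mid b$. So if $r\nmid b$, $k$ must be odd with $k\ge3$.

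None of these steps is a real obstacle; everything reduces to reading off Lemma~\ref{lem:two}. The one point needing care is that the divisibility $n\mid v_2(S_k(x))$ is informative only because the valuation is positive, which is exactly what the restriction to $x\equiv0,3\pmod4$ guarantees.
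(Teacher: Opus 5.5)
Your proposal is correct and follows essentially the same route as the paper. Both arguments combine the fact that $n$ divides $v_2(y^n)$ with the valuation formula of Lemma~\ref{lem:two}, then write $y^n=(y^{n/r})^r$ for a prime $r\mid n$. Your contrapositive for the final sentence and your remark that $b\ge1$ keeps the divisibility nontrivial just spell out steps the paper leaves implicit.
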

\begin{proof}
The valuation of $y^n$ is divisible by $n$. If $r\mid n$ is prime,
then $y^n=(y^{n/r})^r$, and~\eqref{eq:two} gives the assertions.
\end{proof}

\subsection{Normalized odd parts modulo eight}
The valuation alone records the power of $2$ dividing a power sum.
For squares, its odd part supplies a further uniform restriction,
which avoids using large powers of $2$ as certificate moduli.

\begin{lemma}[Uniform $2$-adic square obstruction]\label{lem:uniform-two-square}
Let $x\equiv0,3\pmod4$, let $m$ be the even member of $\{x,x+1\}$,
and write $m=2^a d$, where $a\ge2$ and $d$ is odd. For an even integer
$r\ge4$, put
\[
 c_r=\begin{cases}1,&r\equiv0\pmod4,\\5,&r\equiv2\pmod4.\end{cases}
\]
Then
\begin{align}
 \frac{S_k(x)}{2^{a-1}}&\equiv d c_k\pmod8
       &&\text{if $k\ge4$ is even},\label{eq:even-normalized-two}\\
 \frac{S_k(x)}{2^{2a-2}}&\equiv k c_{k-1}\pmod8
       &&\text{if $k\ge5$ is odd}.\label{eq:odd-normalized-two}
\end{align}
In particular, for odd $k\ge5$, the normalized odd part of $S_k(x)$ is
congruent to $1,7,5,3$ modulo $8$ according as $k$ is congruent to
$1,3,5,7$ modulo $8$.
Consequently, if $S_k(x)$ is a square and $k\ge5$, then $k$ belongs to
one of the following classes modulo $8$:
\[
 \begin{array}{c|c}
 \text{condition on $a,d$}&\text{admissible classes of $k$ modulo $8$}\\\hline
 a\text{ odd},\ d\equiv1\pmod8&\{0,1,4\}\\
 a\text{ odd},\ d\equiv5\pmod8&\{1,2,6\}\\
 \text{all other cases}&\{1\}.
 \end{array}
\]
These are necessary conditions; admissibility does not assert that the
power sum is a square.
\end{lemma}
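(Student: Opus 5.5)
We would prove the two congruences \eqref{eq:even-normalized-two} and \eqref{eq:odd-normalized-two} by sharpening the pairing argument in the proof of Lemma~\ref{lem:two}, keeping three more binary digits. As there, put $A_j(M)=\sum_{i=1}^{M-1}i^j$. Then $S_k(x)=A_k(m)$ or $A_k(m)+m^k$. Since $v_2(m^k)=ka\ge4a$ exceeds both $a+2$ and $2a+1$, it suffices to compute $A_k(m)$ modulo $2^{a+2}$ for even $k$, and modulo $2^{2a+1}$ for odd $k$.

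\emph{Even $k\ge4$.} First we split off the even summands:
\[
A_k(m)=O_k(m)+2^kA_k(m/2),\qquad O_k(m)=\sum_{\substack{i<m\\ i\ \text{odd}}}i^k .
\]
By Lemma~\ref{lem:two}, $v_2(A_k(m/2))\ge\max(a-2,0)$. Hence the second term has valuation at least $k+a-2\ge a+2$, and it drops out. Next we reduce from $m=2^ad$ to $2^a$. Write each odd $i<m$ as $i=i_0+2^aj$ with $0\le j<d$ and $i_0$ odd, $i_0<2^a$. Then
\[
i^k\equiv i_0^k+k2^aj\,i_0^{k-1}\pmod{2^{2a}} .
\]
Since $k$ is even and $O_{k-1}(2^a)\equiv0\pmod{2^a}$ (pair $i_0$ with $2^a-i_0$), this gives $O_k(m)\equiv d\,O_k(2^a)\pmod{2^{a+2}}$, at least once $a\ge2$. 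It then remains to show
\[
O_k(2^a)\equiv2^{a-1}c_k\pmod{2^{a+2}}.
\]
We would prove this by induction on $a$. The base case $a=2$ is $1+3^k$, which is $\equiv2\pmod{16}$ or $\equiv10\pmod{16}$ according as $k\equiv0$ or $2\pmod4$; this matches $2c_k$ modulo $16$. For the inductive step we pair $i$ with $i+2^{a}$ inside $O_k(2^{a+1})$ and expand binomially, in the spirit of Lemma~\ref{lem:two}. This step is the main obstacle. The point is that $i^k$ modulo $2^{a+2}$ is not determined by $i$ modulo $2^a$, so the cross terms $k2^a i^{k-1}$ and $\binom k2 2^{2a}i^{k-2}$ must be tracked exactly. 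The dependence on $k\bmod4$ enters only through the base case and through $v_2(k)$ in these cross terms. If this bookkeeping becomes unwieldy, we would instead write the odd residues as $\pm5^j$ and use lifting-the-exponent on $5^{k2^{a-2}}-1$ to identify the odd part modulo $8$.

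\emph{Odd $k\ge5$.} We use the pairing of Lemma~\ref{lem:two} to higher order. For odd $k$,
\[
i^k+(m-i)^k=kmi^{k-1}-\tbinom k2m^2i^{k-2}+\tbinom k3m^3i^{k-3}-\cdots,
\]
so
\[
A_k(m)=(m/2)^k+km\,A_{k-1}(m/2)-\tbinom k2m^2A_{k-2}(m/2)+\cdots .
\]
The term $(m/2)^k$ has valuation $k(a-1)\ge2a+1$ for $a\ge2$ and $k\ge5$. The $m^2$ term involves an odd-power sum, whose valuation is bounded below by Lemma~\ref{lem:two}. The higher terms carry $m^3$. For $a\ge3$ all of these are $\equiv0\pmod{2^{2a+1}}$, and the case $a=2$ is checked separately by the same expansion with the explicit small sums. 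The main term is evaluated with the even case already proved, applied to $m/2=2^{a-1}d$ when $a\ge3$: we get $A_{k-1}(m/2)\equiv2^{a-2}dc_{k-1}\pmod{2^{a+1}}$. Hence
\[
\frac{km\,A_{k-1}(m/2)}{2^{2a-2}}\equiv k\,d^2c_{k-1}\equiv kc_{k-1}\pmod8 ,
\]
which is \eqref{eq:odd-normalized-two}. Reading off $kc_{k-1}$ for $k\equiv1,3,5,7\pmod8$ gives $1,7,5,3$.

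\emph{The table.} A square has even $2$-adic valuation and odd part $\equiv1\pmod8$. For odd $k\ge5$ the valuation $2a-2$ is always even, so the only condition is $kc_{k-1}\equiv1$, that is, $k\equiv1\pmod8$. For even $k$ the valuation $a-1$ must be even, so $a$ is odd, and we need $dc_k\equiv1\pmod8$. This forces $d\equiv1$ with $k\equiv0\pmod4$, or $d\equiv5$ with $k\equiv2\pmod4$. Taking the union with the odd class $1$ yields the three rows of the table.
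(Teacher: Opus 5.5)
\paragraph{A gap at the central step.} Your overall plan follows the paper's: a doubling induction on $a$ from the base case $a=2$, a reduction from $2^ad$ to $2^a$, and the odd case deduced from the even one by pairing $i$ with $m-i$. The parts you carry out are correct: discarding the even summands, the congruence $O_k(m)\equiv d\,O_k(2^a)\pmod{2^{a+2}}$, the deduction of the odd case for $a\ge3$, and the table.

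The gap is that the central step, the induction $O_k(2^a)\equiv2^{a-1}c_k\pmod{2^{a+2}}$, is never carried out. Your diagnosis of it is also mistaken. Once you sum over $i$, nothing needs to be tracked exactly, and $v_2(k)$ plays no role. Summing your expansion over odd $i<2^a$ gives
\[
 O_k(2^{a+1})=2O_k(2^a)+k2^aO_{k-1}(2^a)+\binom k2 2^{2a}O_{k-2}(2^a)+\sum_{h\ge3}\binom kh 2^{ha}O_{k-h}(2^a).
\]
Your own pairing remark gives $O_{k-1}(2^a)\equiv0\pmod{2^a}$, so the $h=1$ term is divisible by $2^{2a+1}$. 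The sum $O_{k-2}(2^a)$ has $2^{a-1}$ odd terms, hence is even, so the $h=2$ term is also divisible by $2^{2a+1}$. The remaining terms are divisible by $2^{3a}$. For $a\ge2$ all of these vanish modulo $2^{a+3}$, so $O_k(2^{a+1})\equiv2O_k(2^a)\pmod{2^{a+3}}$, and $k\bmod4$ enters only through the base case. This is exactly the paper's step $A_r(2^{a+1})\equiv2A_r(2^a)\pmod{2^{a+3}}$, where Lemma~\ref{lem:two} shows the cross terms are divisible by $2^{3a-1}$. Your $\pm5^j$ fallback would not replace this step: that parametrization determines $O_k(2^b)$ only modulo $2^b$, and you need two more binary digits.

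\paragraph{The deferred case $a=2$.} The second gap is the case $a=2$ of the odd congruence, which is not a formality. Because you pair only $i<m/2$, the main term needs $A_{k-1}(m/2)$ at level $a-1=1$, which the even case does not cover. Also, the term $\binom k2m^2A_{k-2}(m/2)$ has valuation exactly $4<2a+1$ when $k\equiv3\pmod4$, so it does not vanish.

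The case can still be done directly. From $A_{k-1}(2d)\equiv d\pmod8$ and $A_{k-2}(2d)\equiv d\pmod2$ you get $A_k(4d)/4\equiv k-4\binom k2\pmod8$. The $\binom k2$ term is exactly what turns $k$ into $kc_{k-1}$. For instance, $S_7(3)/4=579\equiv3\equiv7c_6\pmod8$, not $7$.

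The paper avoids this special case by pairing over the whole range $1\le j\le m-1$:
\[
 2A_k(m)=kmA_{k-1}(m)+\sum_{h=2}^{k}(-1)^{k-h}\binom kh m^hA_{k-h}(m).
\]
The even-case congruence is then used at level $a\ge2$ itself, and the $h=2$ term has valuation at least $4a-2\ge2a+2$. Dividing by $2$ gives the odd congruence uniformly for all $a\ge2$.
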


\begin{proof}
Write $A_r(u)=\sum_{j=1}^{u-1}j^r$ for $r\ge1$ and
$A_0(u)=u-1$. We use Lemma~\ref{lem:two} in the form
\[
 v_2(A_r(u))=\begin{cases}
 v_2(u)-1,&r=1\text{ or }r\text{ even},\\
 2v_2(u)-2,&r\ge3\text{ odd},
 \end{cases}
 \qquad(2\mid u).
\]
Indeed, this follows by applying that formula to the power sum with
upper limit $u-1$.

We first prove that, for every even $r\ge4$ and $a\ge2$,
\begin{equation}\label{eq:pure-two-even-unit}
 \frac{A_r(2^a)}{2^{a-1}}\equiv c_r\pmod8.
\end{equation}
At $a=2$, one has $A_r(4)=1+2^r+3^r$; reducing modulo $16$ gives
$2$ if $r\equiv0\pmod4$ and $10$ if $r\equiv2\pmod4$, as required.
Put $N=2^a$. Expansion of the block from $N$ to $2N-1$ yields
\[
 A_r(2N)=2A_r(N)+
 \sum_{h=1}^{r-1}\binom rh N^h A_{r-h}(N)+N^{r+1}.
\]
The term with $h=1$ has valuation at least
\[
 1+a+(2a-2)=3a-1,
\]
because $r$ is even and $r-1\ge3$ is odd. For $2\le h\le r-1$,
the valuation is at least $ha+(a-1)\ge3a-1$.
The final term has valuation $(r+1)a\ge3a-1$.
Since $3a-1\ge a+3$ for $a\ge2$, it follows that
\[
 A_r(2^{a+1})\equiv2A_r(2^a)\pmod{2^{a+3}}.
\]
Dividing by $2^a$ proves the induction step in
\eqref{eq:pure-two-even-unit}.

The same expansion into $d$ blocks gives
\[
\begin{split}
 A_r(dN)={}&dA_r(N)+
 \sum_{h=1}^{r-1}\binom rh N^h
 \left(\sum_{t=0}^{d-1}t^h\right)A_{r-h}(N)\\
 &+N^{r+1}\sum_{t=0}^{d-1}t^r.
\end{split}
\]
The preceding estimates show that every term following $dA_r(N)$
is divisible by $2^{3a-1}$, hence by $2^{a+2}$.
Using \eqref{eq:pure-two-even-unit}, we obtain
\begin{equation}\label{eq:even-A-unit}
 A_r(m)\equiv2^{a-1}d c_r\pmod{2^{a+2}}.
\end{equation}
If $x=m-1$, then $S_r(x)=A_r(m)$; if $x=m$, then
$S_r(x)=A_r(m)+m^r$. Since $ra\ge a+2$, the added term does not
change \eqref{eq:even-A-unit}. This proves
\eqref{eq:even-normalized-two}.

Now let $k\ge5$ be odd. Replacing $j$ by $m-j$ gives
\[
 2A_k(m)=kmA_{k-1}(m)+
 \sum_{h=2}^{k}(-1)^{k-h}\binom kh m^h A_{k-h}(m).
\]
The $h=2$ term has valuation at least $2a+(2a-2)=4a-2$, since
$k-2\ge3$ is odd. For $3\le h\le k-1$, the valuation is at least
$ha+(a-1)\ge4a-1$. The final term has valuation $ka$ because
$A_0(m)=m-1$ is odd. All these lower bounds are at least $2a+2$:
this uses $a\ge2$ and $k\ge5$.
After division by $2$, therefore,
\[
 A_k(m)\equiv\frac{km}{2}A_{k-1}(m)\pmod{2^{2a+1}}.
\]
Apply \eqref{eq:even-A-unit} with $r=k-1$. Since $m=2^a d$, this gives
\[
 A_k(m)\equiv2^{2a-2}k d^2 c_{k-1}\pmod{2^{2a+1}}.
\]
Again the possible added term $m^k$ has valuation at least $2a+1$.
As $d^2\equiv1\pmod8$, this proves \eqref{eq:odd-normalized-two}.

An odd square is congruent to $1$ modulo $8$, and a nonzero square
has even $2$-adic valuation. The odd-$k$ table thus permits only
$k\equiv1\pmod8$. For even $k$, the valuation $a-1$ must be even,
so $a$ is odd; the condition on the odd part is $d c_k\equiv1\pmod8$.
It gives $d\equiv1\pmod8$ when $k\equiv0\pmod4$, and
$d\equiv5\pmod8$ when $k\equiv2\pmod4$. These are exactly the
classes listed in the statement.
\end{proof}

\subsection{Periodicity and power residues}
For a prime power $q=\ell^a$, put
\begin{equation}\label{eq:lambda}
 \lambda(q)=
 \begin{cases}
 2^{a-2},&\ell=2\text{ and }a\ge3,\\
 (\ell-1)\ell^{a-1},&\text{otherwise}.
 \end{cases}
\end{equation}
This is the exponent of the group of units modulo $q$.
For a general integer $M\ge2$, we use the Carmichael function
\[
 \lambda(M)=\lcm_{p^a\Vert M}\lambda(p^a).
\]

\begin{lemma}\label{lem:periodic}
Fix $x\ge1$ and $q=\ell^a$. If $k,k'\ge a$ and
$k\equiv k'\pmod{\lambda(q)}$, then
\[
 S_k(x)\equiv S_{k'}(x)\pmod q.
\]
\end{lemma}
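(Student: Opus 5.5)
The plan is to prove the congruence termwise: I will show that $j^k\equiv j^{k'}\pmod q$ for every integer $j$ with $1\le j\le x$, and then sum over $j$. The argument separates the terms according to whether $\ell$ divides $j$.

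\emph{Terms with $\ell\nmid j$.} Here $j$ is a unit modulo $q$. By the definition \eqref{eq:lambda}, $\lambda(q)$ is the exponent of $(\Z/q\Z)^\times$, so $j^{\lambda(q)}\equiv1\pmod q$. We may assume $k\ge k'$ and write $k=k'+t\lambda(q)$ with $t\ge0$. Then
\[
 j^k=j^{k'}\bigl(j^{\lambda(q)}\bigr)^t\equiv j^{k'}\pmod q .
\]
The only point to check is that the displayed value of $\lambda(q)$ really is the group exponent. For odd $\ell$ this holds because the unit group is cyclic of order $(\ell-1)\ell^{a-1}$. For $\ell=2$ and $a\ge3$ we use $(\Z/2^a\Z)^\times\cong C_2\times C_{2^{a-2}}$. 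For $\ell=2$ and $a\le2$ the formula $(\ell-1)\ell^{a-1}$ gives $1$ or $2$, which are the correct exponents. This is the standard Carmichael computation, and I would simply quote it.

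\emph{Terms with $\ell\mid j$.} Here $v_\ell(j^k)=k\,v_\ell(j)\ge k\ge a$, so $j^k\equiv0\pmod q$. In the same way $j^{k'}\equiv0\pmod q$, because $k'\ge a$. This is exactly where the hypothesis $k,k'\ge a$ is needed. Without it, a small exponent could leave a nonzero residue $j^{k'}\bmod q$ for multiples of $\ell$, and periodicity would fail.

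Summing the two cases over $1\le j\le x$ gives $S_k(x)\equiv S_{k'}(x)\pmod q$. Nothing here is a real obstacle. The only step needing care is confirming that \eqref{eq:lambda} gives the unit-group exponent for small powers of $2$; I would verify $q=2$ and $q=4$ by hand.
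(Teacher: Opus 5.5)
Your proposal is correct and follows essentially the same route as the paper: for $\ell\mid j$, the hypothesis $k,k'\ge a$ makes both powers vanish modulo $q$, and for $\ell\nmid j$, equality follows because $\lambda(q)$ is the exponent of the unit group; then you sum over $1\le j\le x$. Your check that \eqref{eq:lambda} gives the correct exponent for $q=2$ and $q=4$ is a small detail the paper leaves implicit.
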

\begin{proof}
If $\ell\mid j$, both $j^k$ and $j^{k'}$ vanish modulo $q$.
If $\ell\nmid j$, their equality follows from the definition of
$\lambda(q)$. Sum over $1\le j\le x$.
\end{proof}

\subsection{A limitation of tests of bounded prime-power depth}
The periodicity just established also describes an obstruction to
excluding large exponents by a finite collection of shallow
congruence tests. The following elementary proposition concerns
the congruences themselves, without additional valuation restrictions.

\begin{proposition}\label{prop:bounded-depth}
Let $x,d,w,s$ be positive integers with $s\ge2$ and
$S_d(x)=w^s$. Let $m_1,\ldots,m_t\ge2$, where $t\ge1$, and put
$M=\lcm(m_1,\ldots,m_t)$. Suppose that
\[
 v_p(M)\le d\qquad\text{for every prime $p\le x$}.
\]
Then every integer $k\ge d$ with
$k\equiv d\pmod{\lambda(M)}$ satisfies
\[
 S_k(x)\equiv w^s\pmod{m_i}\qquad(1\le i\le t).
\]
In particular, these congruences alone leave infinitely many
possible exponents $k$.
\end{proposition}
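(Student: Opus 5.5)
The plan is to prove the stronger statement $S_k(x)\equiv S_d(x)\pmod M$ for every $k\ge d$ with $k\equiv d\pmod{\lambda(M)}$. Since each $m_i$ divides $M$ and $S_d(x)=w^s$, this gives the displayed congruences at once. By the Chinese remainder theorem it is enough to prove the congruence modulo each prime power $q=p^b$ with $p^b\Vert M$. Each such $\lambda(q)$ divides $\lambda(M)$ by the definition of the Carmichael function, so $k\equiv d\pmod{\lambda(q)}$ holds for every such $q$.

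Fix $q=p^b\Vert M$ and compare $j^k$ with $j^d$ termwise for $1\le j\le x$. This repeats the argument of Lemma~\ref{lem:periodic}, but we avoid quoting that lemma directly, because the hypothesis on $d$ is only available for primes $p\le x$.

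\textbf{Case $p\nmid j$.} Here $j^k\equiv j^d\pmod q$, because $\lambda(q)$ is the exponent of $(\Z/q\Z)^\times$ and $\lambda(q)\mid k-d$.

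\textbf{Case $p\mid j$.} Since $1\le j\le x$, we must have $p\le x$, so the hypothesis gives $b=v_p(M)\le d\le k$. Hence $p^b$ divides both $j^d$ and $j^k$, and both terms vanish modulo $q$. If instead $p>x$, no $j$ in the range is divisible by $p$, so only the first case occurs and no depth condition is needed. This is exactly why the hypothesis is restricted to primes $p\le x$.

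Summing over $j$ gives $S_k(x)\equiv S_d(x)=w^s\pmod q$ for every $q$, and hence modulo $M$ and modulo each $m_i$. For the final assertion, the set $\{k\ge d:\ k\equiv d\pmod{\lambda(M)}\}$ is an infinite arithmetic progression, since $\lambda(M)\ge1$ is finite. It contains infinitely many $k\ne d$, all of which pass every test.

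There is no serious obstacle here. The only delicate point is the bookkeeping between the Carmichael exponent and the depth condition: the non-unit terms must be handled by the bound $v_p(M)\le d$, not by periodicity, and only primes $p\le x$ can contribute non-unit terms.
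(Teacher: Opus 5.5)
Your proposal is correct and follows essentially the same route as the paper's proof: you reduce to each $p^b\Vert M$ by the Chinese remainder theorem, use $\lambda(p^b)\mid\lambda(M)$ for the unit terms, and use $p\le x$ together with $b\le d\le k$ to make the non-unit terms vanish. Your remark on why Lemma~\ref{lem:periodic} is not quoted directly, namely that primes $p>x$ may have $v_p(M)>d$ but contribute no non-unit terms, is accurate and matches how the paper's direct argument handles them.
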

\begin{proof}
Fix $p^a\Vert M$ and $1\le j\le x$. If $p\nmid j$, the
divisibility $\lambda(p^a)\mid\lambda(M)$ gives
$j^k\equiv j^d\pmod{p^a}$. If $p\mid j$, then $p\le x$ and
$a\le d\le k$, so both powers vanish modulo $p^a$.
The Chinese remainder theorem and summation therefore give
\[
 S_k(x)\equiv S_d(x)=w^s\pmod M.
\]
The stated progression of $k$ contains arbitrarily large integers.
\end{proof}

\begin{corollary}\label{cor:depth-squares}
For any fixed $x\ge1$, a finite family of direct tests
$S_k(x)\equiv Y^2\pmod m$ cannot exclude all sufficiently large
$k$ unless at least one of its moduli is divisible by $p^4$ for
some prime $p\le x$.
\end{corollary}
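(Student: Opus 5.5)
We will derive the corollary from Proposition~\ref{prop:bounded-depth} by taking $d=3$, $s=2$, and the identity~\eqref{eq:cubes}. The statement is only nontrivial for $x\ge2$. For $x=1$ we have $S_k(1)=1$, a square for every $k$. The tests therefore exclude nothing, and the condition ``divisible by $p^4$ for some prime $p\le1$'' cannot hold, so the assertion is vacuously true. Hence assume $x\ge2$, and suppose a finite family of tests with moduli $m_1,\dots,m_t$ has the property that no $m_i$ is divisible by $p^4$ for any prime $p\le x$. We must show the family fails to exclude infinitely many $k$.

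Put $M=\lcm(m_1,\dots,m_t)$. The first step is to check the depth hypothesis of the proposition. For a prime $p$, the exponent $v_p(M)$ equals $\max_i v_p(m_i)$. By assumption this is at most $3=d$ for every prime $p\le x$. Primes $p>x$ may divide $M$ to any power; the proposition imposes no condition on them. This is consistent with its proof, since such $p$ never divide any $j\le x$.

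Next, apply Proposition~\ref{prop:bounded-depth} with $d=3$, $s=2$ and $w=x(x+1)/2$, using $S_3(x)=w^2$ from~\eqref{eq:cubes}. It yields, for every $k\ge3$ with $k\equiv3\pmod{\lambda(M)}$, the congruences $S_k(x)\equiv w^2\pmod{m_i}$ for all $i$. Each test $S_k(x)\equiv Y^2\pmod{m_i}$ is therefore passed with the single choice $Y=w$. The progression $k\equiv3\pmod{\lambda(M)}$ contains arbitrarily large integers, so the family cannot exclude all sufficiently large $k$. Contrapositively, if it does exclude them, some modulus must be divisible by $p^4$ for a prime $p\le x$.

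There is no real obstacle here. The only points needing care are the choice $d=3$, which is what gives the threshold $p^4$ rather than $p^2$ as the choice $d=1$ with $S_1$ would, and the fact that a single common witness $Y=w$ serves for all moduli simultaneously, as "direct tests" require.
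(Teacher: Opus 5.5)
Your proposal is correct and is exactly the paper's argument: apply Proposition~\ref{prop:bounded-depth} with $d=3$, $s=2$ and $w=x(x+1)/2$ via~\eqref{eq:cubes}, after observing that $v_p(M)=\max_i v_p(m_i)\le 3$ for every prime $p\le x$. Your separate treatment of $x=1$ is harmless but unnecessary, since there the depth hypothesis is vacuous and $S_3(1)=1^2$, so the proposition applies directly.
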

\begin{proof}
Apply Proposition~\ref{prop:bounded-depth} with $d=3$ and
$w=x(x+1)/2$, using~\eqref{eq:cubes}.
\end{proof}

\begin{remark}
This restriction concerns the full arithmetic depth of the tests.
For example, if $v_2(S_k(x))=2h$, the normalized square condition
$S_k(x)/2^{2h}\equiv1\pmod8$ is a congruence modulo $2^{2h+3}$
for $S_k(x)$ itself. Thus the normalized condition of
Lemma~\ref{lem:uniform-two-square} already uses greater $2$-adic
depth than the displayed modulus $8$ suggests.
\end{remark}

\subsection{Power residues}
For a prime $r$, let
\[
 \mathcal R_r(q)=\{z^r\bmod q:z\in\Z\}.
\]
These sets can be determined by direct enumeration. The verifier
uses the following equivalent test, which avoids storing a set of
$q$ residues.
\begin{lemma}\label{lem:residues}
Let $q=\ell^a$, let $r$ be prime, and choose $0\le c<q$.
The residue $0$ belongs to $\mathcal R_r(q)$. For $c\ne0$, write
$c=\ell^v u$ with $\ell\nmid u$, and put $s=a-v$.
Then $c\in\mathcal R_r(q)$ if and only if $r\mid v$ and the
following applicable condition holds:
\begin{enumerate}
\item if $\ell$ is odd, then
\[
 u^{\varphi(\ell^s)/\gcd(r,\varphi(\ell^s))}
 \equiv1\pmod{\ell^s};
\]
\item if $\ell=2$ and $r$ is odd, there is no further condition;
\item if $\ell=r=2$, then $u\equiv1\pmod{2^{\min(3,s)}}$.
\end{enumerate}
\end{lemma}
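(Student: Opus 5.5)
The plan is to reduce everything to the structure of the finite abelian group $(\Z/\ell^a\Z)^\times$ and the way $\ell$-adic valuations behave under $r$-th powers. We treat $c=0$ and $c\neq0$ separately.

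\emph{The case $c=0$, and necessity of $r\mid v$.} The residue $0$ lies in $\mathcal R_r(q)$ because $0=0^r$. For $c\neq0$, suppose $c\equiv z^r\pmod q$ and write $z=\ell^w u'$ with $\ell\nmid u'$. If $rw\ge a$, then $z^r\equiv0$, which contradicts $c\neq0$. Hence $rw<a$, and comparing exact valuations below $a$ gives $v=rw$, so $r\mid v$. In this case $\ell^v u\equiv\ell^v u'^r\pmod{\ell^a}$, which is equivalent to $u\equiv u'^r\pmod{\ell^s}$.

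\emph{Reduction to unit residues.} Conversely, suppose $r\mid v$ and $u\equiv u'^r\pmod{\ell^s}$ for some unit $u'$. Then $z=\ell^{v/r}u'$ satisfies $z^r=\ell^v u'^r\equiv\ell^v u\pmod{\ell^a}$. So, given $r\mid v$, $c\in\mathcal R_r(q)$ holds exactly when $u$ is an $r$-th power in $(\Z/\ell^s\Z)^\times$, with $s\ge1$. This is the heart of the reduction; the remaining work is to describe $r$-th powers in unit groups of prime-power modulus.

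\emph{The case of odd $\ell$.} The group $G=(\Z/\ell^s\Z)^\times$ is cyclic of order $N=\varphi(\ell^s)$. In a cyclic group of order $N$, the image of $g\mapsto g^r$ is the unique subgroup of order $N/\gcd(r,N)$. That subgroup is exactly the kernel of $g\mapsto g^{N/\gcd(r,N)}$, which gives condition~1.

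\emph{The case $\ell=2$.}
\begin{itemize}
\item If $r$ is odd, then $r$ is coprime to $|G|=2^{s-1}$, so $g\mapsto g^r$ is a bijection and every unit is an $r$-th power. This gives condition~2.
\item If $r=2$ and $s\le2$, then squares of units are $\equiv1\pmod{2^s}$. For $s=1$ this condition is vacuous. For $s=2$ the squares are exactly $\{1\}$, since $3^2\equiv1$.
\item If $r=2$ and $s\ge3$, we use $G\cong\{\pm1\}\times\langle5\rangle$ with $\langle5\rangle$ cyclic of order $2^{s-2}$. The squares are then $\langle25\rangle$, of index $4$, and they are contained in $1+8\Z$. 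Since $1+8\Z$ also has index $4$ in $G$, the two sets coincide. This yields $u\equiv1\pmod{2^{\min(3,s)}}$, which is condition~3.
\end{itemize}

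The main obstacle is the case $\ell=2=r$, since it relies on the noncyclic structure of $(\Z/2^s\Z)^\times$. The argument there is a counting comparison: the squares and $1+8\Z$ are subgroups of the same index, one contained in the other.
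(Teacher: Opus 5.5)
Your proof is correct and follows essentially the same route as the paper. Both arguments use the valuation comparison to force $r\mid v$ and to reduce to $r$-th powers among units modulo $\ell^s$, then apply cyclicity for odd $\ell$, the automorphism argument for odd $r$ at $\ell=2$, and the characterization of odd squares modulo $2^s$. Your index-$4$ counting argument via $(\Z/2^s\Z)^\times\cong\{\pm1\}\times\langle5\rangle$ simply supplies a proof of that last standard fact, which the paper states without proof.
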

\begin{proof}
If $z^r\equiv c\pmod{\ell^a}$ and $c\ne0$, then
$r v_\ell(z)=v<a$. Dividing out $\ell^v$ reduces the problem to
whether $u$ is an $r$-th power among the units modulo $\ell^s$.
Conversely, a unit solution gives a solution for $c$ after multiplication
by $\ell^{v/r}$.
For odd $\ell$, the unit group is cyclic, giving the first criterion.
For $\ell=2$, taking an odd power is an automorphism of the unit
group. Finally, an odd unit is a square modulo $2^s$ precisely when
it is $1$ modulo $2^{\min(3,s)}$.
\end{proof}

\subsection{A finite certificate criterion}
Let $K\ge5$ and $x\equiv0,3\pmod4$ be fixed, set
$b=v_2(x(x+1))-1$, and let $r\mid2b$ be prime. Choose prime powers
$q_1,\ldots,q_t$, all with prime-power exponent at most $K$.
If $r=2$, set $M_0=8$ and let $A_0$ be the set of classes given by
Lemma~\ref{lem:uniform-two-square}. If $r$ is odd, define
\[
 M_0=2,\qquad
 A_0=\begin{cases}\{0,1\},&r\mid b,\\\{1\},&r\nmid b.
 \end{cases}
\]
(For the odd primes under consideration, $r\mid2b$ in fact implies
$r\mid b$.)
Suppose $A_{i-1}\subseteq\{0,\ldots,M_{i-1}-1\}$ has been defined.
Put
\[
 m_i=\lambda(q_i),\qquad
 M_i=\lcm(M_{i-1},m_i).
\]
For an integer $c$, let $\kappa_i(c)$ be the unique integer in
$[K,K+m_i-1]$ congruent to $c$ modulo $m_i$. Define
\begin{equation}\label{eq:recurrence}
 \begin{split}
 A_i=\{\,c={}&a+jM_{i-1}:a\in A_{i-1},\quad
 0\le j<M_i/M_{i-1},\\[-1mm]
 &S_{\kappa_i(c)}(x)\bmod q_i\in\mathcal R_r(q_i)\,\}.
 \end{split}
\end{equation}
Every element in this set lies between $0$ and $M_i-1$.

\begin{proposition}\label{prop:certificate}
With the notation above, every solution of $S_k(x)=z^r$ with
$k\ge K$ satisfies $k\bmod M_i\in A_i$ for each
$0\le i\le t$. In particular, if $A_t$ is empty, no such solution
exists.
\end{proposition}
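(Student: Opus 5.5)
The argument is an induction on $i$. Fix a solution of $S_k(x)=z^r$ with $k\ge K$.

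\textbf{Base case $i=0$.} Suppose first that $r=2$. Since $K\ge5$, Lemma~\ref{lem:uniform-two-square} applies directly: a square value with $k\ge5$ forces $k\bmod 8$ into the set $A_0$ prescribed by the table for the given $a,d$.

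Suppose next that $r$ is odd. Then $A_0=\{0,1\}$ whenever $r\mid b$, which is all of $\Z/2\Z$, so the condition is vacuous. If $r\nmid b$, Corollary~\ref{cor:exponents} forces $k$ to be odd, so $k\bmod 2=1\in A_0$. The parenthetical remark that odd $r\mid 2b$ implies $r\mid b$ is not even needed for this.

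\textbf{Inductive step.} Assume $k\bmod M_{i-1}=a\in A_{i-1}$, and put $c=k\bmod M_i$. Since $M_{i-1}\mid M_i$, we can write $c=a+jM_{i-1}$ with $0\le j<M_i/M_{i-1}$, because $c\equiv a\pmod{M_{i-1}}$. So $c$ is one of the candidates in~\eqref{eq:recurrence}, and it remains to check the residue condition.

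Write $q_i=\ell^{e}$ with $e\le K$. By construction $\kappa_i(c)\ge K\ge e$ and $\kappa_i(c)\equiv c\equiv k\pmod{m_i}$, using $m_i\mid M_i$. We also have $k\ge K\ge e$. Lemma~\ref{lem:periodic} with $q=q_i$ and $\lambda(q_i)=m_i$ then gives
\[
 S_{\kappa_i(c)}(x)\equiv S_k(x)=z^r\pmod{q_i}.
\]
Hence $S_{\kappa_i(c)}(x)\bmod q_i\in\mathcal R_r(q_i)$, and therefore $c\in A_i$. The last assertion of the proposition follows at once: if $A_t=\emptyset$, no such $k$ can exist.

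The only point needing care is the hypothesis of Lemma~\ref{lem:periodic}: both exponents must be at least the exponent $e$ of $q_i$. This is exactly why $\kappa_i$ chooses its representative in $[K,K+m_i-1]$, and why the prime-power exponents are restricted to be at most $K$. Everything else is bookkeeping with the Chinese remainder structure of the nested moduli $M_{i-1}\mid M_i$.
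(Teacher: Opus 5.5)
Your proof is correct and takes the same route as the paper: induction on $i$. The initial classes come from Lemma~\ref{lem:uniform-two-square} when $r=2$ and from Corollary~\ref{cor:exponents} when $r$ is odd, and the inductive step writes $k\bmod M_i$ as a lift of a class in $A_{i-1}$ and applies Lemma~\ref{lem:periodic} to $k$ and $\kappa_i(c)$. You also make explicit the check that both exponents are at least the prime-power exponent of $q_i$, which the paper leaves implicit, and you are right that the parenthetical remark about $r\mid b$ is not needed.
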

\begin{proof}
Lemma~\ref{lem:uniform-two-square} gives the initial classes when
$r=2$, and Corollary~\ref{cor:exponents} gives them when $r$ is odd.
At the $i$-th step, every residue modulo $M_i$ lying above a class
in $A_{i-1}$ has exactly the form enumerated
in~\eqref{eq:recurrence}. Lemma~\ref{lem:periodic} identifies the
power sum for $k$ with that for $\kappa_i(c)$ modulo $q_i$.
The latter must be an $r$-th power residue. Induction proves the
claim.
\end{proof}

The order of the moduli affects the running time but not the set
of simultaneous conditions. The search chooses the moduli adaptively.
The independent verifier reads each list in its recorded order and
performs every lift in~\eqref{eq:recurrence}, without period compression.

\section{Proof of Theorem~\ref{thm:finite}}\label{sec:finite}
For $3\le x\le10^6$ with $x\equiv0,3\pmod4$, the integer
$b=v_2(x(x+1))-1$ lies between $1$ and $18$. Its distribution and
the prime exponents required by Corollary~\ref{cor:exponents} are
shown in Table~\ref{tab:branches}. Thus there are exactly $591\,804$
pairs $(x,r)$ to consider, with $r\in\{2,3,5,7,11,13,17\}$.
The numbers of branches for these primes, in order, are
$500\,000$, $71\,428$, $16\,130$, $3936$, $244$, $62$ and $4$.
Every original power exponent satisfies $n\le36$ in this range.

\begin{table}[htbp]
\centering
\caption{The reduction to prime exponents in the range of
Theorem~\ref{thm:finite}.}\label{tab:branches}
\begin{tabular}{crcr}
\toprule
$b$&Number of values of $x$&Primes dividing $2b$&Branches\\
\midrule
1&250000&2&250000\\
2&125000&2&125000\\
3&62500&2, 3&125000\\
4&31250&2&31250\\
5&15626&2, 5&31252\\
6&7812&2, 3&15624\\
7&3906&2, 7&7812\\
8&1954&2&1954\\
9&976&2, 3&1952\\
10&488&2, 5&976\\
11&244&2, 11&488\\
12&122&2, 3&244\\
13&62&2, 13&124\\
14&30&2, 7&60\\
15&16&2, 3, 5&48\\
16&8&2&8\\
17&4&2, 17&8\\
18&2&2, 3&4\\
\midrule
Total&500000&&591804\\
\bottomrule
\end{tabular}
\end{table}

\subsection{Exclusion of all exponents \texorpdfstring{$k\ge6$}{k >= 6}}
For each pair, the electronic file \path{certificates.tsv} contains
an ordered finite list of prime-power moduli. There are $1\,221\,859$
modulus entries in total, using $87$ distinct moduli. The largest
modulus is $991$. Every modulus has prime-power exponent at most
$6$, with exponent $6$ occurring at $729=3^6$.
We apply Proposition~\ref{prop:certificate} with $K=6$.
The verifier constructs all sets in~\eqref{eq:recurrence}, starting
from the uniform square restriction or the parity condition as
appropriate, and using Lemmas~\ref{lem:periodic} and~\ref{lem:residues}.
The final set is empty for every one of the $591\,804$ pairs.
Consequently there is no solution of~\eqref{eq:main} with $k\ge6$
in the stated range of $x$.

The largest value of $\lcm(M_0,\lambda(q_1),\ldots,\lambda(q_t))$
over the certificate lists is $19\,656\,000$, attained at $(x,r)=(266631,2)$.
Here $M_0=8$ for squares and $M_0=2$ for odd prime exponents.
The surviving classes are stored explicitly, and every lift to the
next period is tested. In total, the verifier examines $74\,389\,941$
lifts and performs $39\,009\,676$ modular power-residue tests.
The largest surviving set contains $9315$ classes, occurring at
$(x,r)=(167399,2)$ after the modulus $729$.
Thus the verification imposes no upper bound on $k$.
Representative lists are given in
Appendix~\ref{app:certificates}.

As a small illustration, take $x=128$ and $r=3$. The modulus $7$
alone suffices. Since $128=18\cdot7+2$, complete blocks modulo $7$
give
\[
\begin{array}{c|rrrrrr}
 k\bmod6&0&1&2&3&4&5\\\hline
 S_k(128)\bmod7&5&3&5&2&3&5.
\end{array}
\]
None of these residues belongs to $\mathcal R_3(7)=\{0,1,6\}$.
This example also shows why complete blocks must be included when
the power-sum residue is evaluated.

The case $x=9800$ illustrates the role of prime powers among the
moduli. Since $S_1(9800)=6930^2$, a sieve using only prime moduli
always retains a class congruent to $1$ modulo every corresponding
unit-group period. The modulus $625$ can eliminate such classes.
Indeed, for $k\ge6$ with $k\equiv1\pmod{500}$, the multiples of $5$
contribute zero modulo $625$, while the other bases contribute
their first powers. Hence
\[
 S_k(9800)\equiv\frac{9800\cdot9801}{2}
       -5\frac{1960\cdot1961}{2}
 =38\,416\,000\equiv375\pmod{625}.
\]
The residue has $5$-adic valuation $3$, so it is not a square
modulo $625$. The full list $625,61,31$, together with the
initial classes modulo $8$, excludes all $k\ge6$ at $x=9800$.

\subsection{The exponents \texorpdfstring{$1\le k\le5$}{1 <= k <= 5}}
For each of these five exponents, a separate Python program computes $S_k(x)$
by exact integer arithmetic for all $1\le x\le10^6$.
At each admissible $x$, it tests every divisor $n\ge2$ of
\[
 \begin{cases}
 b,&k=1\text{ or }k\text{ is even},\\
 2b,&k\ge3\text{ is odd}.
 \end{cases}
\]
For each such $n$, it computes $\lfloor S_k(x)^{1/n}\rfloor$ by
an integer-root algorithm and tests equality after raising to the
$n$-th power. Corollary~\ref{cor:exponents} shows that these tests
cover every possible $n$, including composite exponents.
There are $2\,702\,326$ integer-root tests in total. Each returned
integer root $u$ is also checked against
$u^n\le S_k(x)<(u+1)^n$. The accumulated power sums are also
compared with their closed polynomial formulas at every admissible
$x$, giving $2\,500\,000$ additional exact equality checks.
Table~\ref{tab:small-k} records the
number of tests and solutions separately for each $k$.

\begin{table}[htbp]
\centering
\caption{Exact small-exponent checks for $3\le x\le10^6$,
$x\equiv0,3\pmod4$.}\label{tab:small-k}
\begin{tabular}{crrr}
\toprule
$k$&Integer-root tests&Solution quadruples&Wall time (s)\\
\midrule
1&303346&4&0.750\\
2&303346&1&0.772\\
3&896144&500004&2.184\\
4&303346&0&0.944\\
5&896144&0&2.076\\
\midrule
Total&2702326&500009&6.727\\
\bottomrule
\end{tabular}
\end{table}

The resulting set consists of exactly the $500\,000$ quadruples
in~\eqref{eq:universal} and the nine quadruples
in~\eqref{eq:exceptions}. Their existence follows directly
from~\eqref{eq:cubes} and
\begin{gather*}
 S_1(8)=36=6^2,\qquad S_1(288)=41616=204^2,\\
 S_1(9800)=48\,024\,900=6930^2,\qquad S_2(24)=4900=70^2,\\
 S_1(332928)=55\,420\,693\,056=235416^2.
\end{gather*}
Together with the preceding subsection, this proves
Theorem~\ref{thm:finite}.\hfill$\square$

\subsection{A worked example: \texorpdfstring{$x=224$}{x = 224}}
\label{subsec:example224}
We illustrate the complete method by solving~\eqref{eq:main} for
$x=224$. After the initial $2$-adic restriction, only the two
moduli $13$ and $81$ are needed. We show directly that the only
solution is
\[
 (k,n,y)=(3,2,25200).
\]

\paragraph{Reduction to squares.}
Since $224=2^5\cdot7$ and $225=3^2\cdot5^2$, we have
$a=v_2(224\cdot225)=5$ and $b=a-1=4$.
Lemma~\ref{lem:two} gives
\[
 v_2(S_k(224))=
 \begin{cases}
 4,&k=1\text{ or }k\text{ is even},\\
 8,&k\ge3\text{ is odd}.
 \end{cases}
\]
Thus a solution requires $n\in\{2,4\}$ in the first case and
$n\in\{2,4,8\}$ in the second. Every such $n$ is even, so every
perfect-power value under consideration must be a square.
Equivalently, the only prime exponent required by
Corollary~\ref{cor:exponents} is $r=2$.

\paragraph{The initial exponent classes.}
In Lemma~\ref{lem:uniform-two-square} we have $a=5$ and $d=7$.
For even $k\ge4$,
\[
 \frac{S_k(224)}{16}\equiv
 \begin{cases}
 7\pmod8,&k\equiv0\pmod4,\\
 3\pmod8,&k\equiv2\pmod4.
 \end{cases}
\]
Neither residue can be an odd square. For odd $k\ge5$, the same
lemma gives
\[
 \begin{array}{c|rrrr}
 k\bmod8&1&3&5&7\\\hline
 S_k(224)/256\bmod8&1&7&5&3.
 \end{array}
\]
Hence every square value with $k\ge6$ requires $k\equiv1\pmod8$.
With $K=6$, the initial data in
Proposition~\ref{prop:certificate} are therefore
$M_0=8$ and $A_0=\{1\}$. The displayed congruences also exclude
$k=4$ and $k=5$.

\paragraph{The modulus $13$.}
For odd $k$, pairing $j$ and $13-j$ shows that a complete block
of residues has power sum zero modulo $13$.
Since $224=17\cdot13+3$, it follows that
\[
 S_k(224)\equiv1+2^k+3^k\pmod{13}.
\]
The period in $k$ is $12$. Combining it with the initial period
$8$ gives $M_1=24$ and the following three possibilities:
\[
 \begin{array}{c|rrr}
 k\bmod24&1&9&17\\\hline
 S_k(224)\bmod13&6&7&3.
 \end{array}
\]
The square residues modulo $13$ are
$\{0,1,3,4,9,10,12\}$. Consequently only
$k\equiv17\pmod{24}$ survives, and $A_1=\{17\}$.
In particular, every remaining exponent satisfies $k\equiv2\pmod3$.

\paragraph{The modulus $81$.}
For every odd $k\ge3$, we have the uniform congruence
\[
 S_k(224)\equiv27k\pmod{81}.
\]
To prove it, pair $j$ with $225-j$. Since $225^2$ is divisible
by $81$, the binomial theorem gives
\[
 2S_k(224)
 =\sum_{j=1}^{224}\bigl(j^k+(225-j)^k\bigr)
 \equiv225k\,S_{k-1}(224)\pmod{81}.
\]
For every positive even integer $s=2h$,
\[
 \sum_{u=0}^{8}u^s
 \equiv2(1+4^h+7^h)
 \equiv2\bigl(1+(1+3h)+(1+6h)\bigr)
 \equiv6\pmod9.
\]
The terms divisible by $3$ vanish here, and the second congruence
follows by expanding $4^h=(1+3)^h$ and $7^h=(1+6)^h$ modulo $9$.
Splitting $1,\ldots,225$ into $25$ complete blocks modulo $9$,
and noting that $225^{k-1}\equiv0\pmod9$, yields
\[
 S_{k-1}(224)\equiv25\cdot6\equiv6\pmod9.
\]
It follows that
$2S_k(224)\equiv225k\cdot6\equiv54k\pmod{81}$.
Division by $2$, which is invertible modulo $81$, proves the claim.

For the remaining class $k\equiv17\pmod{24}$, this congruence gives
$S_k(224)\equiv54\pmod{81}$.
Every integer $54+81t=27(2+3t)$ has $3$-adic valuation $3$,
so this residue cannot be a square. Thus no exponent $k\ge6$
survives.

\paragraph{The small exponents.}
We have already excluded $k=4,5$. Direct evaluation gives
\[
 S_1(224)=25200=16\cdot1575,\qquad
 S_2(224)=3771600=16\cdot235725.
\]
Their normalized odd parts are $7$ and $5$ modulo $8$,
respectively, so neither value is a square.
Finally,
\[
 S_3(224)=25200^2,\qquad
 25200=2^4\cdot3^2\cdot5^2\cdot7.
\]
Since $v_7(S_3(224))=2$, a representation $S_3(224)=y^n$
with $n\ge2$ requires $n=2$. Hence the only solution is
$(k,n,y)=(3,2,25200)$, as asserted.

\paragraph{The finite certificate.}
For $k\ge6$, the successive periods and surviving classes are
\begin{center}
\begin{tabular}{lcc}
\toprule
Condition&Combined period&Surviving classes\\
\midrule
Initial square restriction&$8$&$\{1\}$\\
After the modulus $13$&$24$&$\{17\}$\\
After the modulus $81$&$216$&$\varnothing$\\
\bottomrule
\end{tabular}
\end{center}
Here $\lambda(81)=54$ and $\lcm(24,54)=216$.
At the last step, the nine lifts $17+24t$ with $0\le t\le8$
all give residue $54$ modulo $81$. The empty final set therefore
excludes every $k\ge6$, with no upper bound on $k$.

\subsection{Reproducibility and computational results}
\label{subsec:computations}
The electronic supplement contains the complete certificate file,
the independent verifier \path{verify_million.cpp}, the exact
small-exponent check \path{small_exponents.py}, and the search
programs. It also includes the run logs, machine-readable reports,
and a script that records the execution environment.

\paragraph{Certificate construction.}
The complete certificate file was constructed from scratch, without
seed certificates. The search processed batches of $1000$ branches
using six worker processes. Its initial pool consisted of $58$
prime-power moduli, all at most $729$.
It produced $591\,767$ certificates and left $37$ branches unresolved.
For these branches, a second search used all odd prime powers
$q\le1000$ whose prime-power exponent is at most $6$.
All $37$ branches then closed. Both stages are implemented by
\path{generate_certificates.py}. The search used a bound of
$20\,000\,000$ on the combined uncompressed period and a bound of
$5000$ on the number of classes in each proposed intersection,
using the compressed input periods and before further period compression.
These are search restrictions; they are not assumptions of the verification.
In particular, the final verifier stores uncompressed survivor sets,
whose maximum size is $9315$.

The search precomputes modular prefix sums and may replace a periodic
set by an exactly equivalent set with a smaller period. A floating-point
score is used only to rank candidate moduli. Neither the score nor the
period-compression procedure is used by the independent verifier.

\paragraph{Independent verification.}
The verifier reconstructs the full set of required $(x,r)$ pairs
from the valuation formula and compares it, in order, with the
certificate records. Missing, additional or misordered records cause
failure. It checks that every modulus is a prime power with exponent
at most $6$, independently computes the modular sums and power-residue
tests, and enumerates every lift in~\eqref{eq:recurrence}.
It accepts a branch only when the final set is empty.

To avoid repeating modular exponentiations, the verifier caches prefix
sums for a modulus and an exponent class. If $x=u q+t$ with $0\le t<q$,
it evaluates
\[
 S_k(x)\equiv u\sum_{j=1}^{q-1}j^k+\sum_{j=1}^{t}j^k\pmod q.
\]
Thus complete blocks are retained, including for even $k$.
This arithmetic cache does not identify or omit any exponent classes.
The proof-critical verification uses exact integer arithmetic, with
explicit overflow checks. The small-exponent program uses Python's
arbitrary-precision integers and checks all permissible composite
exponents as well as prime exponents.

\begin{table}[!ht]
\centering
\caption{Results of the complete independent verification.}
\label{tab:verification}
\begin{tabular}{lr}
\toprule
Quantity&Value\\
\midrule
Admissible values of $x$&500000\\
Verified prime-exponent branches&591804\\
Unresolved branches&0\\
Modulus entries&1221859\\
Distinct moduli&87\\
Largest modulus&991\\
Largest combined period&19656000\\
Largest uncompressed survivor set&9315\\
Residue-class lifts examined&74389941\\
Modular power-residue tests&39009676\\
Small-exponent integer-root tests&2702326\\
Closed-form power-sum cross-checks&2500000\\
Total solution quadruples&500009\\
\bottomrule
\end{tabular}
\end{table}

\paragraph{Hardware and software.}
The computations were performed on 24 October 2026 in a containerized
x86\_64 Linux environment reporting an AMD EPYC 9V74 processor.
The environment exposed nine logical CPUs, with a cgroup CPU quota
of eight CPU-equivalents and a memory limit of $8$~GiB. These are
container resource limits, rather than a claim of dedicated physical
cores. The operating system was Ubuntu 24.04.3 LTS, with Linux kernel
6.18.55.

Certificate construction used CPython 3.12.14 and NumPy 2.3.5.
The initial search used six forked worker processes; the OpenMP and
OpenBLAS thread limits were set to one. The completion search ran in
a single process. The independent verifier was compiled with GCC
13.3.0, using \texttt{-O3 -std=c++17 -Wall -Wextra -Wpedantic}
and \texttt{-Wconversion -Wshadow}, and ran in a single thread.
The small-exponent check used one Python
process and only the standard library.

\begin{table}[!ht]
\centering
\caption{Measured wall-clock times in the recorded environment.
Compilation and manuscript preparation are excluded.}
\label{tab:timings}
\begin{tabular}{lrr}
\toprule
Stage&Worker processes&Wall time (s)\\
\midrule
Full certificate construction&6, then 1&175.889\\
Independent verification, $k\ge6$&1&13.231\\
Exact checks, $1\le k\le5$&1&6.727\\
\midrule
Sum of the three stages&&195.847\\
\bottomrule
\end{tabular}
\end{table}

Within the construction stage, the initial search took $173.305$
seconds and the completion search took $1.401$ seconds. The full
construction time also includes precomputation, branch enumeration,
and assembly of the final file.
The summed CPU time of the timed initial-search worker intervals was
$1035.730$ seconds, and the largest individual worker peak resident set
was $129\,844$~KiB. The latter is not an aggregate memory measurement,
since forked workers share the precomputed tables. The verifier used
$13.191$ CPU seconds and had a peak resident set of $16\,768$~KiB;
the small-exponent check used $6.683$ CPU seconds and peaked at
$9600$~KiB. All times are measurements of these complete runs, not
extrapolations from samples. Detailed reports retain the measurements
at greater precision.

\paragraph{Files and commands.}
Each line of \path{certificates.tsv} has the form
\texttt{x r t q1 ... qt}, specifying the branch and its ordered list
of $t$ moduli. The final file contains $591\,804$ lines and
$10\,222\,900$ bytes. Its SHA-256 digest is
\begin{verbatim}
ba5a41511a90321b1296a407fef30ab77efe710e2d0a5ebb8ac31d2586f06bc1
\end{verbatim}
The two proof checks can be reproduced without rerunning the search:
\begin{verbatim}
g++ -O3 -std=c++17 verify_million.cpp -o verify_million
./verify_million --limit 1000000 certificates.tsv
python3 small_exponents.py
\end{verbatim}
The first command sequence checks every $k\ge6$; the Python program
checks all remaining cases and compares the result with the exact
solution set in~\eqref{eq:universal}--\eqref{eq:exceptions}.
\Needspace{9\baselineskip}
The construction can also be repeated using the supplied search
sources and the pinned dependency in \path{requirements-search.txt}:
\begin{verbatim}
OPENBLAS_NUM_THREADS=1 OMP_NUM_THREADS=1 \
python3 generate_certificates.py --limit 1000000 --workers 6 \
    --output regeneration/certificates.tsv
\end{verbatim}
The regenerated file has the same SHA-256 digest displayed above.
The script \path{verify_progression.py} separately checks the finite
tables used in the next section. The supplement's README gives
the complete commands and explains the machine-readable reports.

\begin{remark}[Availability of the programs]
The programs used for the computations, together with the complete
certificates and detailed run logs, are available from the author
upon request.
\end{remark}

\section{An infinite progression of completely solved cases}
\label{sec:progression}
\begin{proof}[Proof of Theorem~\ref{thm:progression}]
We have $x\equiv4\pmod8$, so $v_2(x(x+1))=2$.
Lemma~\ref{lem:two} therefore excludes $k=1$ and even $k$.
For odd $k\ge3$, it gives $v_2(S_k(x))=2$, and hence $n=2$.

Suppose first that $k\ge5$. The following table holds for every
$x\equiv4\pmod8$:
\begin{equation}\label{eq:table32}
\begin{array}{c|rrrr}
 k\bmod8&1&3&5&7\\\hline
 S_k(x)\bmod32&4&28&20&12.
\end{array}
\end{equation}
To see the uniformity in $x$, increase $x$ by $8$. The four even
bases contribute zero modulo $32$. The four odd bases can be paired
so that both pairs have the same sum $c=2x+8$, which is divisible
by $16$. For odd $u$ and odd $k$,
\[
 u^k+(c-u)^k\equiv kcu^{k-1}\equiv c\pmod{32}.
\]
The two pairs therefore contribute $2c\equiv0\pmod{32}$.
It remains to calculate the table at $x=4$, where
$S_k(4)\equiv1+3^k\pmod{32}$.
Only $4$ among the entries in~\eqref{eq:table32} is a square
modulo $32$. Thus
\begin{equation}\label{eq:k8}
 k\equiv1\pmod8.
\end{equation}

For an odd prime $q$ and odd $k$, a full block of $q$ residues has
power sum zero modulo $q$, by pairing $a$ and $q-a$.
Since $x\equiv28\pmod q$ for $q=13,157,313$, we may replace
$S_k(x)$ by $S_k(28)$ modulo each of these primes.
Combining~\eqref{eq:k8} with the period $12$ modulo $13$ gives
\[
\begin{array}{c|rrr}
 k\bmod24&1&9&17\\\hline
 S_k(28)\bmod13&3&6&7.
\end{array}
\]
Only $3$ is a quadratic residue modulo $13$, so
$k\equiv1\pmod{24}$.

The period modulo $157$ is $156$, and $\lcm(24,156)=312$.
Table~\ref{tab:157} lists all thirteen remaining exponent classes.
Only the classes
\[
 k\equiv193\quad\text{or}\quad217\pmod{312}
\]
survive. The period modulo $313$ is also $312$, and these two
classes give
\[
 S_{193}(28)\equiv89,\qquad S_{217}(28)\equiv178\pmod{313}.
\]
Both residues are quadratic nonresidues, since
\[
 89^{156}\equiv178^{156}\equiv-1\pmod{313}.
\]
This excludes every $k\ge5$. For $k=3$, identity~\eqref{eq:cubes}
gives precisely~\eqref{eq:universal}.
\end{proof}

\begin{table}[htbp]
\centering
\caption{The thirteen classes $k\equiv1\pmod{24}$ modulo $312$.
The last column is the Legendre symbol modulo $157$.}
\label{tab:157}
\begin{tabular}{rrc}
\toprule
$k\bmod312$&$S_k(28)\bmod157$&Legendre symbol\\\midrule
1&92&$-1$\\25&80&$-1$\\49&32&$-1$\\73&103&$-1$\\
97&6&$-1$\\121&54&$-1$\\145&134&$-1$\\169&104&$-1$\\
193&33&$+1$\\217&146&$+1$\\241&65&$-1$\\265&77&$-1$\\289&62&$-1$\\
\bottomrule
\end{tabular}
\end{table}

\section{Odd-prime valuations and further local obstructions}
\label{sec:odd}
Exact valuations of power sums have been studied in several forms;
see Lengyel \cite{L}, MacMillan--Sondow \cite{MS},
Sondow--Tsukerman \cite{ST}, Hajdu \cite{H15,H16}, and
Amdeberhan--Moll--Sharma--Vignat \cite{AMSV}. The formula below
for $p-1\mid k-1$ is a consequence of Lengyel's
Theorem~3. We include a pairing proof, since this form is convenient
for the local restrictions on~\eqref{eq:main}. For $p=3$, it
recovers the odd-exponent part of Hajdu's $3$-adic formula.

\subsection{An exact valuation formula}
For an integer $m\ge2$, write
\[
 A_j(m)=\sum_{i=1}^{m-1}i^j\quad(j\ge1),\qquad A_0(m)=m-1.
\]
The following complete-block congruence is also given in
\cite[Corollary~2]{ST}; we recall an elementary proof.
\begin{lemma}\label{lem:blocks}
Let $p$ be an odd prime, $a=v_p(m)\ge1$, and $j\ge1$. Then
\begin{equation}\label{eq:blocks}
 A_j(m)\equiv
 \begin{cases}
 -(m/p^a)p^{a-1}\pmod{p^a},&p-1\mid j,\\
 0\pmod{p^a},&p-1\nmid j.
 \end{cases}
\end{equation}
In particular, $v_p(A_j(m))\ge a-1$, with equality in the first case.
\end{lemma}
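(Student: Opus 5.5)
We prove the congruence \eqref{eq:blocks} by induction on $a$, splitting the range $1\le i\le m-1$ into complete residue blocks. Write $m=p^a d$ with $p\nmid d$.

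\textbf{Reduction to $m=p^a$.} Write each $i$ with $0\le i<m$ as $i=tp^a+u$ with $0\le t<d$ and $0\le u<p^a$. Then $i^j\equiv u^j\pmod{p^a}$, so
\[
 A_j(m)\equiv d\,A_j(p^a)\pmod{p^a}.
\]
Since $m/p^a=d$, it suffices to prove that $A_j(p^a)\equiv -p^{a-1}$ or $0\pmod{p^a}$, according as $p-1\mid j$ or not.

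\textbf{The case $a=1$.} Here $A_j(p)=\sum_{u=1}^{p-1}u^j$. Take a primitive root $g$ modulo $p$. Then $A_j(p)\equiv\sum_{s=0}^{p-2}g^{sj}\pmod p$. This geometric sum equals $p-1\equiv-1$ when $p-1\mid j$, and vanishes otherwise because $g^j\ne1$. Both agree with the claim.

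\textbf{The inductive step.} Assume the claim for $a$ and put $N=p^a$. Write $i=tN+u$ with $0\le t<p$ and $0\le u<N$, and expand binomially:
\[
 A_j(pN)=\sum_{t=0}^{p-1}\sum_{u=0}^{N-1}(tN+u)^j
 = p\,A_j(N)+\sum_{h\ge1}\binom jh N^h\Bigl(\sum_{t=0}^{p-1}t^h\Bigr)A_{j-h}(N)
\]
(taking $0^0=1$ in the $u=0$ term, which changes nothing). We show every term with $h\ge1$ is divisible by $p^{a+1}$.

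For $h\ge2$, the factor $N^h$ already gives valuation $ha\ge a+1$.

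For $h=1$, the term is $jN\bigl(\sum_t t\bigr)A_{j-1}(N)$. Here $\sum_{t=0}^{p-1}t=p(p-1)/2$ is divisible by $p$ because $p$ is odd. So this term has valuation at least $a+1$.

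This is exactly where oddness of $p$ is needed, and it is the only delicate point. It is analogous to the $2$-adic computation in Lemma~\ref{lem:two}, where this term does not vanish.

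Hence $A_j(p^{a+1})\equiv pA_j(p^a)\pmod{p^{a+1}}$. The inductive hypothesis gives $A_j(p^a)=-p^{a-1}+p^a w$ or $p^a w$ for some integer $w$. Multiplying by $p$ gives $-p^a$ or $0$ modulo $p^{a+1}$, as required.

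\textbf{Valuation statement.} When $p-1\nmid j$, the congruence gives $v_p(A_j(m))\ge a\ge a-1$. When $p-1\mid j$, the right-hand side $-dp^{a-1}$ has exact valuation $a-1$ because $p\nmid d$, and it is nonzero modulo $p^a$. Therefore $v_p(A_j(m))=a-1$ exactly.
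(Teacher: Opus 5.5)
Your proof is correct and follows essentially the same route as the paper: reduce to a single block of length $p^a$ via $A_j(m)\equiv (m/p^a)A_j(p^a)\pmod{p^a}$, then induct on $a$ by writing residues as $tp^{a-1}+u$. In that expansion the constant terms give $p$ times the previous level, the linear terms acquire the extra factor $p$ from $\sum_{t=0}^{p-1}t=p(p-1)/2$ because $p$ is odd, and the higher terms are killed by $N^h$; you also supply the primitive-root argument for the base case $a=1$, which the paper only asserts.
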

\begin{proof}
For one block modulo $p$, the sum of the $j$-th powers is $-1$ if
$p-1\mid j$, and $0$ otherwise. For $a\ge2$, write each residue
modulo $p^a$ uniquely as $u+vp^{a-1}$, with
$0\le u<p^{a-1}$ and $0\le v<p$. In the binomial expansion,
the constant terms give $p$ times the sum at level $a-1$.
The sum of the linear terms contains the factor
\[
 p^{a-1}\sum_{v=0}^{p-1}v=p^a(p-1)/2,
\]
and every higher term contains $p^{2(a-1)}$, divisible by $p^a$.
Induction proves the result for one block of length $p^a$.
There are $m/p^a$ complete blocks, identical modulo $p^a$.
The residue $0$ contributes nothing, so~\eqref{eq:blocks} follows.
\end{proof}

\begin{proposition}[A consequence of Lengyel's theorem]\label{thm:odd-prime}
Let $p$ be an odd prime and $k\ge3$. If
\[
 p-1\mid k-1,\qquad p\mid x(x+1),
\]
then
\begin{equation}\label{eq:odd-prime}
 v_p(S_k(x))=v_p(k)+2v_p(x(x+1))-1.
\end{equation}
\end{proposition}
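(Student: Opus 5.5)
The plan is to follow the pairing argument used for odd $k$ in Lemma~\ref{lem:two}, with Lemma~\ref{lem:blocks} supplying the exact valuation of the main term. Let $m$ be the member of $\{x,x+1\}$ divisible by $p$, and put $a=v_p(m)=v_p(x(x+1))\ge1$. Since $p-1$ is even and $p-1\mid k-1$, the exponent $k$ is odd. If $x=m-1$ then $S_k(x)=A_k(m)$. If $x=m$ then $S_k(x)=A_k(m)+m^k$, and I will check at the end that $v_p(m^k)=ka$ exceeds the target valuation. So it suffices to prove
\[
 v_p(A_k(m))=v_p(k)+2a-1 .
\]

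Replace $i$ by $m-i$ in $A_k(m)$ and expand binomially. Because $k$ is odd, the $h=0$ term is $-A_k(m)$, and we obtain
\[
 2A_k(m)=kmA_{k-1}(m)+\sum_{h=2}^{k}(-1)^{k-h}\binom kh m^hA_{k-h}(m),
\]
where $A_0(m)=m-1$. Since $p-1\mid k-1$, the equality case of Lemma~\ref{lem:blocks} gives $v_p(A_{k-1}(m))=a-1$. Hence the main term has valuation exactly $v_p(k)+a+(a-1)=v_p(k)+2a-1$. Division by $2$ does not change $p$-adic valuations, because $p$ is odd.

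It remains to show that every term with $h\ge2$ has strictly larger valuation. I will use $v_p\binom kh\ge v_p(k)-v_p(h)$, which follows from $\binom kh=\frac kh\binom{k-1}{h-1}$, together with $v_p(A_{k-h}(m))\ge a-1$ from Lemma~\ref{lem:blocks}.

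\begin{itemize}
\item For $h=2$: $p\nmid 2$, and $k-2$ is odd, so $p-1\nmid k-2$. Lemma~\ref{lem:blocks} then gives $v_p(A_{k-2}(m))\ge a$. The term therefore has valuation at least $v_p(k)+3a$.
\item For $3\le h\le k-1$: the valuation is at least $v_p(k)-v_p(h)+ha+a-1$. This exceeds the target exactly when $(h-1)a>v_p(h)$. That holds because $v_p(h)\le\log_3h<h-1$.
\item For $h=k$: the term is $\pm m^k(m-1)$, whose valuation is $ka$. Since $k\ge p^{v_p(k)}\ge 3^{v_p(k)}$ and $k\ge3$, we get $ka\ge k>v_p(k)+1$, and then $ka\ge v_p(k)+2a$. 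The same bound disposes of the added term $m^k$ in the case $x=m$.
\end{itemize}

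The main obstacle is this middle estimate, where a factor of $p$ in $h$ can cancel part of the factor $p^{v_p(k)}$ coming from the binomial coefficient. The bound $v_p\binom kh\ge v_p(k)-v_p(h)$, combined with the extra powers $m^{h-1}$, should absorb that loss uniformly. I will also check the boundary case $a=1$, $h=3$, $p=3$: it gives $2>1$, so the inequality remains strict there.
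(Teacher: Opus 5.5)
Your proof is correct and is essentially the paper's argument: the same pairing identity $2A_k(m)=kmA_{k-1}(m)+\cdots$, the exact valuation $a-1$ of $A_{k-1}(m)$ from Lemma~\ref{lem:blocks}, the bound $v_p\binom kh\ge v_p(k)-v_p(h)$ for the middle terms, and $ka\ge v_p(k)+2a$ for both the last term and the added $m^k$. Your separate treatment of $h=2$ via $p-1\nmid k-2$ is harmless but unnecessary, because the general middle-term inequality $(h-1)a>v_p(h)$ already covers $h=2$.
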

\begin{proof}
The divisibility condition makes $k$ odd. First suppose $p\mid m$,
and put $a=v_p(m)$ and $t=v_p(k)$. Since the map $i\mapsto m-i$
permutes $1,\ldots,m-1$, binomial expansion gives
\begin{equation}\label{eq:pairing}
 2A_k(m)=kmA_{k-1}(m)
 +\sum_{j=2}^{k}(-1)^{k-j}\binom{k}{j}m^jA_{k-j}(m).
\end{equation}
By Lemma~\ref{lem:blocks}, the first term has valuation $t+2a-1$.
For $2\le j\le k-1$, the identity
$\binom{k}{j}=(k/j)\binom{k-1}{j-1}$ gives
\[
 v_p\!\left(\binom{k}{j}\right)\ge t-v_p(j).
\]
The corresponding summand in~\eqref{eq:pairing} therefore has
valuation at least
\[
 t+(j+1)a-1-v_p(j)\ge t+2a.
\]
Here we used $(j-1)a-v_p(j)\ge1$, valid for every odd prime $p$,
$j\ge2$, and $a\ge1$. The last summand, with $j=k$, has
valuation $ka\ge t+2a$, since $t\le k-2$.
Thus the first term has strictly smallest valuation.
As $p$ is odd,
\[
 v_p(A_k(m))=t+2a-1.
\]
The term $m^k$ has larger valuation, so the same formula holds for
$A_k(m)+m^k$.
If $p\mid x$, take $m=x$ and use $S_k(x)=A_k(x)+x^k$.
If $p\mid x+1$, take $m=x+1$ and use $S_k(x)=A_k(x+1)$.
Since $x$ and $x+1$ are coprime, these two cases prove the proposition.
\end{proof}

To see the relation with \cite[Theorem~3]{L}, the von
Staudt--Clausen theorem gives $v_p(B_{k-1})=-1$ when
$p-1\mid k-1$. Taking $m=1$ in Lengyel's theorem yields
$v_p(S_k(b-1))=2v_p(b)+v_p(k)-1$ for $p\mid b$.
This gives the case $p\mid x+1$ directly; adding $x^k$ gives the
case $p\mid x$, as in the proof above.

For $p=3$, the condition $p-1\mid k-1$ holds for every odd $k$.
The remaining residue class $x\equiv1\pmod3$ satisfies
$S_k(x)\equiv1\pmod3$, since $i^k\equiv i\pmod3$ for odd $k$.
Thus Proposition~\ref{thm:odd-prime} gives a short independent proof of
the complete odd-exponent part of Hajdu's $3$-adic formula.

\begin{corollary}\label{cor:odd-prime}
If~\eqref{eq:main} holds and $p$ satisfies the hypotheses of
Proposition~\ref{thm:odd-prime}, then
\[
 n\mid v_p(k)+2v_p(x(x+1))-1.
\]
Consequently, for $k\ge3$ with $p-1\mid k-1$:
\begin{enumerate}
\item if $n$ is even and $v_p(k)$ is even, then $p\nmid x(x+1)$;
\item if $p\nmid k$ and $v_p(x(x+1))=1$, there is no solution.
\end{enumerate}
For several applicable primes dividing $x(x+1)$, the exponent $n$
divides the greatest common divisor of the resulting valuations.
\end{corollary}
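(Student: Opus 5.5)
The plan is to derive everything directly from Proposition~\ref{thm:odd-prime}. If~\eqref{eq:main} holds, then $v_p(y^n)=n\,v_p(y)$ is divisible by $n$. The valuation on the left is exactly $v_p(k)+2v_p(x(x+1))-1$ by~\eqref{eq:odd-prime}. This quantity is at least $2\cdot1-1=1$, so $S_k(x)$ is divisible by $p$. In particular $y\ne0$ and the divisibility $n\mid v_p(k)+2v_p(x(x+1))-1$ is meaningful, with a positive right-hand side. This is the main assertion and needs no further computation.

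For item~(1), suppose $n$ is even and $v_p(k)$ is even, and suppose for contradiction that $p\mid x(x+1)$. Then the proposition applies, and the valuation $v_p(k)+2v_p(x(x+1))-1$ is odd: it is even plus even minus one. An odd positive integer cannot be divisible by the even number $n$, which is the contradiction. So $p\nmid x(x+1)$.

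For item~(2), the hypotheses $p\nmid k$ and $v_p(x(x+1))=1$ (which gives $p\mid x(x+1)$) make the valuation equal to $0+2-1=1$. Then $n\mid1$, which contradicts $n\ge2$.

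For the final sentence, apply the main divisibility separately to each prime $p$ satisfying the hypotheses (odd, $p-1\mid k-1$, $p\mid x(x+1)$). Then $n$ divides each resulting valuation, and hence their greatest common divisor. There is no real obstacle here. The only point to check is that the hypothesis $k\ge3$ together with $p-1\mid k-1$ is exactly what Proposition~\ref{thm:odd-prime} requires. One should also note that the valuations are positive, so that the divisibility statements do not degenerate.
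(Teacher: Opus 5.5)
Your proposal is correct and matches the paper's argument: $n$ divides $v_p(y^n)$, which by Proposition~\ref{thm:odd-prime} equals $v_p(k)+2v_p(x(x+1))-1$; this value is odd in case~(1) and equals $1$ in case~(2). The gcd statement then follows by applying the divisibility to each applicable prime, as you do.
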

\begin{proof}
The valuation of $y^n$ is divisible by $n$. In the first excluded
case the valuation in~\eqref{eq:odd-prime} is odd, and in the
second case it equals $1$.
\end{proof}

\subsection{Uniform restrictions for progressions of exponents}
\begin{theorem}\label{thm:density}
Let $\mathcal P$ be a nonempty finite set of odd primes, and put
\[
 M=\lcm_{p\in\mathcal P}\bigl(p(p-1)\bigr).
\]
For every $k>1$ with $k\equiv1\pmod M$, the following hold.
\begin{enumerate}
\item If $S_k(x)$ is a perfect power, then
$v_p(x(x+1))\ne1$ for every $p\in\mathcal P$.
The periodic set defined by these necessary conditions has natural
density
\[
 D(\mathcal P)=\prod_{p\in\mathcal P}
 \left(1-\frac2p+\frac2{p^2}\right).
\]
\item If $S_k(x)$ is an even power, then
$\gcd(x(x+1),\prod_{p\in\mathcal P}p)=1$.
The periodic set defined by this condition has natural density
\[
 E(\mathcal P)=\prod_{p\in\mathcal P}\left(1-\frac2p\right).
\]
\end{enumerate}
\end{theorem}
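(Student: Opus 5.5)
The argument applies Proposition~\ref{thm:odd-prime} and Corollary~\ref{cor:odd-prime} prime by prime and then counts residues. Fix $k>1$ with $k\equiv1\pmod M$ and a prime $p\in\mathcal P$. Since $p(p-1)\mid M$, we have $p-1\mid k-1$, and $k$ is odd with $k\ge M+1\ge7$, so $k\ge3$. Moreover $k\equiv1\pmod p$, so $p\nmid k$ and $v_p(k)=0$. Whenever $p\mid x(x+1)$, Proposition~\ref{thm:odd-prime} therefore gives
\[
 v_p(S_k(x))=2v_p(x(x+1))-1,
\]
which is odd and positive. If $S_k(x)=y^n$, then $n$ divides this valuation, exactly as in Corollary~\ref{cor:odd-prime}.

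For part~1, a perfect power has $n\ge2$. If $v_p(x(x+1))=1$, the valuation above equals $1$, and $n\mid1$ is impossible; this is item~2 of Corollary~\ref{cor:odd-prime}. For part~2, suppose $n$ is even. Then $n$ divides an odd positive integer whenever $p\mid x(x+1)$, which is again impossible, so $p\nmid x(x+1)$ for each $p\in\mathcal P$. This is item~1 of Corollary~\ref{cor:odd-prime} with $v_p(k)=0$. Taking all $p\in\mathcal P$ together gives the coprimality with $\prod_{p\in\mathcal P}p$.

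For the densities, I will first check that each condition is periodic in $x$. The condition $p\nmid x(x+1)$ depends only on $x\bmod p$. The condition $v_p(x(x+1))\ne1$ depends only on $x\bmod p^2$, because $x$ and $x+1$ are coprime, so at most one of them is divisible by $p$ and $v_p(x(x+1))$ is the valuation of that one. Modulo $p$, the excluded classes are $x\equiv0,-1$, so $p-2$ of the $p$ classes survive and the local density is $1-2/p$. Modulo $p^2$, the classes with $v_p(x(x+1))=1$ are those with $x\equiv0$ or $-1\pmod p$ but not modulo $p^2$. There are $2(p-1)$ of them, leaving
\[
 p^2-2p+2
\]
admissible classes and local density $1-2/p+2/p^2$. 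The Chinese remainder theorem applies with the coprime moduli $\prod p^2$ (or $\prod p$), so the joint set is periodic and its density is the product of the local densities. This gives $D(\mathcal P)$ and $E(\mathcal P)$.

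The only point that needs care is confirming that the hypotheses of Proposition~\ref{thm:odd-prime} hold for every $p$ at once and that $v_p(k)=0$. Both follow from $p(p-1)\mid M$, so I expect no genuine obstacle in this proof.
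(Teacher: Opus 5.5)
Your proposal is correct and follows the paper's proof. Both arguments use $p(p-1)\mid M$ to get $p-1\mid k-1$ and $p\nmid k$, apply Corollary~\ref{cor:odd-prime} (via Proposition~\ref{thm:odd-prime}) prime by prime, count the $2(p-1)$ excluded classes modulo $p^2$ and the two excluded classes modulo $p$, and combine the local densities by the Chinese remainder theorem. You additionally spell out details the paper leaves implicit, namely $k\ge3$ and the periodicity of $v_p(x(x+1))\ne1$ modulo $p^2$.
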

\begin{proof}
For every $p\in\mathcal P$, the condition on $k$ implies
$p-1\mid k-1$ and $p\nmid k$. Apply
Corollary~\ref{cor:odd-prime}.
Modulo $p^2$, exactly $2(p-1)$ residue classes satisfy
$v_p(x(x+1))=1$. Modulo $p$, exactly two classes have
$p\mid x(x+1)$. The density formulas follow from the Chinese
remainder theorem.
\end{proof}

\begin{corollary}\label{cor:small-density}
For every $\varepsilon>0$, there is an arithmetic progression of
exponents $k>1$ and a periodic set $\mathcal A\subseteq\N$ of
natural density less than $\varepsilon$ such that, simultaneously
for all $k$ in that progression,
\[
 S_k(x)\text{ is a perfect power}\quad\Longrightarrow\quad x\in\mathcal A.
\]
\end{corollary}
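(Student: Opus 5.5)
The plan is to apply Theorem~\ref{thm:density} with a suitably large finite set $\mathcal P$ of odd primes and to take $\mathcal A$ to be the periodic set described in part~(1) of that theorem. Fix $\varepsilon>0$. For any nonempty finite $\mathcal P$, put $M=\lcm_{p\in\mathcal P}(p(p-1))$ and use the progression $k\equiv1\pmod M$ with $k>1$; this progression is infinite. Define
\[
 \mathcal A=\{x\in\N:\ v_p(x(x+1))\ne1\text{ for every }p\in\mathcal P\}.
\]
Whether $x$ lies in $\mathcal A$ depends only on $x$ modulo $\prod_{p\in\mathcal P}p^2$, so $\mathcal A$ is periodic. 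Part~(1) of Theorem~\ref{thm:density} gives the implication $S_k(x)$ perfect power $\Rightarrow x\in\mathcal A$ simultaneously for all such $k$, and it shows that $\mathcal A$ has density $D(\mathcal P)$.

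It remains to choose $\mathcal P$ so that $D(\mathcal P)<\varepsilon$. For $p\ge3$ we have
\[
 0<1-\frac2p+\frac2{p^2}\le1-\frac1p,
\]
since $2/p^2\le 1/p$ whenever $p\ge2$. Hence
\[
 D(\mathcal P)\le\prod_{p\in\mathcal P}\Bigl(1-\frac1p\Bigr)\le\exp\Bigl(-\sum_{p\in\mathcal P}\frac1p\Bigr).
\]
By the divergence of $\sum_p 1/p$ (Euler, or Mertens), taking $\mathcal P$ to be all odd primes up to a sufficiently large bound makes this sum exceed $\log(1/\varepsilon)$, and then $D(\mathcal P)<\varepsilon$.

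There is no real obstacle. The only points to check are that the density is really given by the product formula, which follows from the Chinese remainder theorem applied to the independent conditions modulo the distinct $p^2$, and that the divergence of the prime reciprocal series is available. If one prefers to avoid the latter, part~(2) works equally well: the set defined by $\gcd(x(x+1),\prod_{p\in\mathcal P}p)=1$ has density $E(\mathcal P)\le D(\mathcal P)$. That set, however, controls only even powers, so part~(1) is the one that matches the statement.
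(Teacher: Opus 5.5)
Your proof is correct and matches the paper's argument: you take $\mathcal A$ from part~(1) of Theorem~\ref{thm:density}, then force $D(\mathcal P)<\varepsilon$ using $1-2/p+2/p^2\le\exp(-1/p)$ and the divergence of $\sum_p 1/p$. Your closing aside about part~(2) is slightly off, since $E(\mathcal P)\to0$ also depends on that divergence, but the main argument does not rely on it.
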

\begin{proof}
As $\mathcal P$ increases through the odd primes, $D(\mathcal P)$
tends to zero. Indeed, for $p\ge3$,
$1-2/p+2/p^2\le\exp(-1/p)$, and the sum of the reciprocals of the
primes diverges. Choose a finite $\mathcal P$ with
$D(\mathcal P)<\varepsilon$ and use Theorem~\ref{thm:density}.
\end{proof}

The progression in Corollary~\ref{cor:small-density} depends on
$\varepsilon$. The same periodic set works for every exponent in
the chosen progression. The densities above describe locally
admissible sets; no equality with the density of actual solutions
is asserted. The parameters differ from those in the density theorem
of Patel and Siksek \cite{PS}, which fixes an even summand exponent
and a nonzero common difference and proves that the lengths admitting
a perfect-power value have natural density zero. Gy\H{o}ry, Patel,
Pint\'er and Siksek \cite{GPPS26} establish a character-twisted
analogue: for a primitive quadratic character of conductor $f$,
assuming that the corresponding generalized Bernoulli polynomial
is irreducible of degree at least two, their nonexistence result
holds for a set of lengths of relative density one among the
multiples of $f$. Both results fix the summand exponent. In contrast,
the local restrictions here hold simultaneously for all $k$ in the
chosen arithmetic progression.

\begin{example}
For $k\equiv13\pmod{420}$, the primes $3,5,7$ satisfy
$p-1\mid k-1$ and $p\nmid k$. Any even-power value requires
\[
 \gcd(x(x+1),105)=1,
\]
a condition of density $1/7$. For arbitrary perfect powers,
the corresponding necessary conditions leave density
\[
 \frac59\cdot\frac{17}{25}\cdot\frac{37}{49}
 =\frac{629}{2205}.
\]
\end{example}

\subsection{A fixed-exponent obstruction from Bernoulli numerators}
The use of the lowest-degree term of the power-sum polynomial also
underlies the valuation results in \cite{L}. The following criterion
is useful when a prime divides a Bernoulli numerator. Let $B_j$
denote the Bernoulli numbers with $B_1=-1/2$, and let
$\Z_{(p)}=\{a/b\in\Q:p\nmid b\}$.
\begin{proposition}\label{prop:bernoulli}
Let $k\ge5$ be odd, let $p>k+1$ be prime, and put
$e=v_p(B_{k-1})$. If
\[
 a=v_p(x(x+1))\ge1,\qquad 2a>e,
\]
then
\begin{equation}\label{eq:bernoulli}
 v_p(S_k(x))=2a+e.
\end{equation}
If $e$ is odd, these hypotheses exclude every even exponent $n$
in~\eqref{eq:main}.
\end{proposition}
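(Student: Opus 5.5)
We expand $S_k(x)$ through Faulhaber's formula and read off the lowest-order terms in $N$, where $N$ is the multiple of $p$ in $\{x,x+1\}$. Throughout we write $m=x+1$. Then
\[
 S_k(x)=A_k(m)=\frac{1}{k+1}\sum_{j=0}^{k}\binom{k+1}{j}B_j\,m^{k+1-j},
\]
with the usual convention. For our purposes it is cleaner to expand around whichever of $x$, $x+1$ is divisible by $p$. If $p\mid x+1$, take $N=x+1$ and use $S_k(x)=A_k(N)$. If $p\mid x$, take $N=x$, use $S_k(x)=A_k(N)+N^k$, and note that $N^k$ has valuation $ka\ge5a>2a+e$ (since $2a>e$). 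In either case $a=v_p(N)$, because $x$ and $x+1$ are coprime.

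Next we determine the three lowest terms. For odd $k\ge5$, $B_k=0$, so the $m^1$ term vanishes. The $m^2$ term is $\frac{1}{k+1}\binom{k+1}{k-1}B_{k-1}N^2=\frac{k}{2}B_{k-1}N^2$. Since $p>k+1$, both $k$ and $2$ are $p$-units, so this term has valuation exactly $2a+e$. It remains to show every other term has valuation $>2a+e$. For $j\le k-2$ the coefficient is
\[
 \frac{1}{k+1}\binom{k+1}{j}B_j=\frac{1}{k+1-j}\binom{k}{j}B_j .
\]
Here $k+1-j\le k+1<p$, so the first factor is a $p$-unit. The binomial coefficient is an integer. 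Moreover $B_j\in\Z_{(p)}$: by von Staudt--Clausen, $p$ divides the denominator of $B_j$ only when $p-1\mid j$, and $j\le k<p-1$ rules this out. Hence the term with $N^{k+1-j}$, where $k+1-j\ge3$, has valuation at least $3a$. We have $3a>2a+e$ exactly when $a>e$. That is stronger than $2a>e$, so this step is the main obstacle.

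To fix this, I would use the finer information that odd-index Bernoulli numbers vanish, so only $j$ even (and $j=1$) contribute. The powers of $N$ that occur are therefore $N^2, N^4, \dots$ and $N^{k}$ from $j=1$ (times $\tfrac12$), together with $N^{k+1}/(k+1)$. All exponents other than $2$ are at least $4$, because $k\ge5$ and $k+1-j$ with $j\le k-3$ even gives $k+1-j\ge4$. These terms therefore have valuation at least $4a$. We have $4a>2a+e$ precisely when $2a>e$, which is exactly the hypothesis. Hence $v_p(S_k(x))=2a+e$.

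Finally, if $e$ is odd then $2a+e$ is odd. Since $v_p(y^n)=n\,v_p(y)$ is even for even $n$, no even exponent $n$ is possible in \eqref{eq:main}.
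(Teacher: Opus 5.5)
Your proposal is correct and follows essentially the same route as the paper. Both expand with Faulhaber's formula around the multiple of $p$ in $\{x,x+1\}$, use the vanishing of odd-index Bernoulli numbers to remove the $z$ and $z^3$ terms, and use von Staudt--Clausen with $p>k+1$ to put the remainder in $z^4\Z_{(p)}[z]$; the comparison $4a>2a+e$, the absorption of $x^k$ when $p\mid x$, and the parity argument for even $n$ are also exactly as in the paper.
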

\begin{proof}
Faulhaber's formula gives
\[
 \sum_{i=1}^{z-1}i^k
 =\frac{B_{k+1}(z)-B_{k+1}}{k+1}
 =\frac{kB_{k-1}}2z^2+R(z).
\]
The coefficients of $z$ and $z^3$ vanish because the odd Bernoulli
numbers of index at least $3$ vanish. After cancellation of the
constant term, the Bernoulli numbers in the remaining coefficients
have index at most $k$. Since $p>k+1$, the von Staudt--Clausen
denominator theorem shows that these coefficients are $p$-integral.
Hence
\[
 R(z)\in z^4\Z_{(p)}[z].
\]
If $v_p(z)=a$, the quadratic term has valuation $2a+e$, since
$p\nmid2k$, whereas $v_p(R(z))\ge4a>2a+e$.
Use $z=x+1$ when $p\mid x+1$. When $p\mid x$, use $z=x$ and
add $x^k$, which is absorbed into the remainder because $k\ge5$.
The assertion about even powers follows from the odd valuation.
\end{proof}

For $k=13$, we have $B_{12}=-691/2730$ and
$13B_{12}/2=-691/420$. Thus
\[
 691\mid x(x+1)\quad\Longrightarrow\quad
 v_{691}(S_{13}(x))=2v_{691}(x(x+1))+1.
\]
Together with Proposition~\ref{thm:odd-prime} at $p=3,5,7$, this gives
\[
 S_{13}(x)\text{ is an even power}\quad\Longrightarrow\quad
 \gcd(x(x+1),3\cdot5\cdot7\cdot691)=1.
\]
The resulting periodic set has density
\[
 \frac17\left(1-\frac2{691}\right)=\frac{689}{4837}.
\]
This illustrates how primes dividing Bernoulli numerators can
supplement the primes covered by~\eqref{eq:odd-prime}.

\subsection{Uniform obstructions from irregular Bernoulli numbers}
\label{subsec:kummer-obstructions}
The preceding proposition treats a fixed exponent and requires
$p>k+1$. We now prove Theorem~\ref{thm:kummer-uniform}, which
uses a congruence on $k$ instead and is uniform on an entire
exponent progression. Lengyel's Theorem~5 \cite{L} already
expresses the valuation through Bernoulli numbers when the endpoint
has sufficiently large $p$-adic valuation. Our purpose here is to
give an explicit depth condition that remains valid as $k$ varies.

We use Kummer's congruences in their full form: for an odd prime $p$,
if $u,v$ are positive even integers, $h\ge1$, and
\[
 u\equiv v\pmod{p^{h-1}(p-1)},\qquad p-1\nmid u,
\]
then
\begin{equation}\label{eq:kummer-full}
 (1-p^{u-1})\frac{B_u}{u}
 \equiv(1-p^{v-1})\frac{B_v}{v}\pmod{p^h}.
\end{equation}
See, for example, \cite[(1.1)]{Rosen2018}. Rational congruences
are understood $p$-adically. Keeping the Euler factors avoids any
additional lower bound on the indices in terms of $h$.

\begin{proof}[Proof of Theorem~\ref{thm:kummer-uniform}]
The congruence on $k$ gives
\[
 k-1\equiv r,\qquad k\equiv r+1,\qquad
 k+1\equiv r+2\pmod p.
\]
Since $4\le r\le p-3$, all three numbers are $p$-adic units.
Also, $k$ is odd and $k\ge r+1\ge5$.
Apply~\eqref{eq:kummer-full} with $u=k-1$, $v=r$, and $h=e+1$:
\[
 (1-p^{k-2})\frac{B_{k-1}}{k-1}
 \equiv(1-p^{r-1})\frac{B_r}{r}\pmod{p^{e+1}}.
\]
The right-hand side has valuation exactly $e$. Since the Euler
factors and the two denominators are units, it follows that
\begin{equation}\label{eq:kummer-B-valuation}
 v_p(B_{k-1})=e.
\end{equation}

For positive integers $z$, the sum $A_k(z)=\sum_{i=1}^{z-1}i^k$
is represented by the polynomial
\[
 A_k(z)=\frac{B_{k+1}(z)-B_{k+1}}{k+1}
       =\sum_{j=1}^{k+1}c_jz^j,
 \qquad
 c_j=\frac{1}{k+1}\binom{k+1}{j}B_{k+1-j}.
\]
The odd Bernoulli numbers of index at least $3$ vanish, so
\[
 c_1=c_3=0,\qquad c_2=\frac{kB_{k-1}}2,
 \qquad v_p(c_2)=e.
\]
The von Staudt--Clausen theorem gives $v_p(B_j)\ge-1$ for
every nonzero Bernoulli number. As $p\nmid k+1$, every nonzero
$c_j$ therefore has valuation at least $-1$. Moreover,
\[
 k-3\equiv r-2\pmod{p-1},\qquad 2\le r-2\le p-5,
\]
so $p-1\nmid k-3$. Hence $B_{k-3}$ and $c_4$ are $p$-integral.

If $v_p(z)=a\ge1$, the term $c_4z^4$ belongs to
$p^{4a}\Z_{(p)}$, and every nonzero term $c_jz^j$ with $j\ge5$
has valuation at least $ja-1\ge5a-1\ge4a$. Thus
\[
 A_k(z)-c_2z^2\in p^{4a}\Z_{(p)},
 \qquad v_p(c_2z^2)=2a+e<4a.
\]
The leading term has strictly smaller valuation than the remainder,
and consequently
\begin{equation}\label{eq:kummer-A-valuation}
 v_p(A_k(z))=2a+e.
\end{equation}
If $p\mid x+1$, take $z=x+1$, so $S_k(x)=A_k(z)$.
If $p\mid x$, take $z=x$ and use $S_k(x)=A_k(x)+x^k$.
The added term has valuation $ka\ge5a>2a+e$ and cannot change
\eqref{eq:kummer-A-valuation}. This proves
\eqref{eq:kummer-uniform-valuation} in both cases.

If $e$ is odd, the valuation $2a+e$ is odd and excludes a square.
If $e$ is odd and $x\equiv3,4\pmod8$, then $v_2(x(x+1))=2$ and
Lemma~\ref{lem:two} gives $v_2(S_k(x))=2$. Any representation
$S_k(x)=y^n$ with $n\ge2$ would therefore require $n=2$,
which has already been excluded.
\end{proof}

\begin{remark}\label{rem:kummer-depth}
When $e$ is odd, put $b=(e+1)/2$. The depth condition in
Theorem~\ref{thm:kummer-uniform} is then equivalent to
$x\equiv0,-1\pmod{p^b}$. Thus its nonexistence assertion is
uniform on explicit progressions of both $x$ and $k$.
For $e=1$, it requires only $p\mid x(x+1)$.
\end{remark}

\begin{proof}[Proof of Corollary~\ref{cor:kummer-families}]
The exact Bernoulli values
\[
 B_{32}=-\frac{7\,709\,321\,041\,217}{510},
 \qquad B_{12}=-\frac{691}{2730}
\]
satisfy $v_{37}(B_{32})=v_{691}(B_{12})=1$.
Apply Theorem~\ref{thm:kummer-uniform} first with
$(p,r,e)=(37,32,1)$. The exponent modulus is $37\cdot36=1332$.
The Chinese remainder theorem identifies the simultaneous conditions
\[
 x\equiv3,4\pmod8,\qquad x\equiv0,-1\pmod{37}
\]
with $x\equiv36,147,148,259\pmod{296}$.
For the second family, use $(p,r,e)=(691,12,1)$ and
$691\cdot690=476\,790$. The conditions modulo $8$ and $691$
give $x\equiv691,2763,2764,4836\pmod{5528}$.
\end{proof}

The first family contains the subprogression
\[
 k\equiv33\pmod{1332},\quad k\equiv1\pmod8
 \quad\Longleftrightarrow\quad k\equiv33\pmod{2664}.
\]
It therefore supplies exclusions among exponents satisfying the
necessary condition from Lemma~\ref{lem:uniform-two-square}.
For instance,
\[
 v_2(S_{33}(36))=2,\qquad v_{37}(S_{33}(36))=3,
\]
and these two valuations alone exclude every perfect-power
representation. The second family extends the preceding fixed
example $k=13$, $p=691$ to an entire progression of exponents.

\begin{remark}
For $x\equiv1,2\pmod4$, formula~\eqref{eq:two} gives
$v_2(S_k(x))=0$. The finite prime-exponent reduction used in
Theorem~\ref{thm:finite} is then unavailable. A complete classification in these two classes requires another
source of exponent restrictions, such as the logarithmic-form
estimates used in \cite{BHMP}, before the same sieve can be made
complete. Theorem~\ref{thm:kummer-uniform} nevertheless gives
uniform square exclusions in these classes for its specified
progressions of $k$ and $x$.
\end{remark}

\Needspace{36\baselineskip}
\appendix
\section{Representative lists of certificate moduli}
\label{app:certificates}
Here $r$ denotes the prime exponent of the candidate perfect power.
Together with the initial classes of Section~\ref{sec:sieve},
each listed set of moduli gives an empty final set in
Proposition~\ref{prop:certificate}, with $K=6$.
The full $591\,804$ lists are supplied in \path{certificates.tsv}.
\small
\begin{longtable}{rrp{102mm}}
\toprule $x$&$r$&Moduli\\\midrule
\endfirsthead
\toprule $x$&$r$&Moduli\\\midrule
\endhead
\bottomrule\endfoot
3&2&81, 13\\
28&2&13, 193, 157, 313\\
64&2&625, 61\\
64&5&625, 101\\
128&3&7\\
224&2&729, 13\\
999&2&729, 193, 17\\
4095&11&729, 67, 199\\
4096&11&23\\
5264&3&243, 73, 181, 625, 61\\
9800&2&625, 61, 31\\
10000&2&37, 7\\
10000&3&7\\
16383&13&53, 157\\
16384&13&157, 131\\
262143&17&729, 103, 239, 137\\
262144&17&103, 239\\
266631&2&41, 11, 131, 313, 9, 601, 37, 109, 251, 379, 113, 193, 29\\
332928&2&729, 73\\
332928&3&243, 37, 109\\
786431&17&103, 137, 307\\
786432&17&137, 103\\
1000000&2&17\\
1000000&5&31, 71, 131, 421, 61\\
\end{longtable}
\normalsize
The lists need not be unique. In particular, the direct proof for
$x=224$ in Section~\ref{subsec:example224} uses the alternative
list $13,81$.

\section*{Use of AI tools}
The author used ChatGPT to polish the mathematical English of the
manuscript and to assist with the development of the computational
programs.

\end{document}